\documentclass[11pt]{article}
\usepackage[margin=1in]{geometry}
\usepackage{amsmath,amssymb,amsthm}
\usepackage{mathtools}
\usepackage{hyperref}

\newtheorem{theorem}{Theorem}[section]
\newtheorem{lemma}[theorem]{Lemma}
\newtheorem{corollary}[theorem]{Corollary}
\theoremstyle{definition}
\newtheorem{example}[theorem]{Example}
\theoremstyle{plain}
\theoremstyle{remark}
\newtheorem{remark}[theorem]{Remark}
\theoremstyle{plain}
\newcommand{\Z}{\mathbb Z}

\title{\textbf{Block Sensitivity and Certificate Complexity of Rotation Symmetric Boolean Functions}}
\author{Yuan Li$^{*}$ \and Jing Zhang}
\date{}

\begin{document}
\maketitle

\begin{center}
\small
Yuan Li: Department of Mathematics, Winston--Salem State University,
NC 27110, USA.\\
E-mail: \texttt{liyu@wssu.edu}\\[2pt]
Jing Zhang: Mathematics Department, Governors State University,
IL 60484, USA.\\
E-mail: \texttt{jzhang@govst.edu}\\[4pt]
$^{*}$ Corresponding author.\\[2pt]
\textbf{2020 Mathematics Subject Classification.}
Primary 68Q17; Secondary 06E30, 94C10.
\end{center}

\begin{abstract}
We study block sensitivity and certificate complexity of rotation symmetric
Boolean functions, with particular emphasis on monomial rotation symmetric
Boolean functions. We first show that every odd-degree monomial rotation
symmetric Boolean function on $n$ variables satisfies $s=bs=C=n$. We then
determine sensitivity, block sensitivity, and certificate complexity for
every quadratic monomial rotation symmetric Boolean function. For the quadratic
function $Q_{n,k}$ generated by $x_0x_k$, we prove, outside the short-orbit
case, that
\[
C(Q_{n,k})=
\begin{cases}
n,&4\mid n/\gcd(n,k),\\
n-\gcd(n,k),&4\nmid n/\gcd(n,k).
\end{cases}
\]
Moreover, \(bs(Q_{n,k})=C(Q_{n,k})\) for every quadratic MRS function, and
outside the short-orbit case, strict separation \(s<bs=C\) occurs exactly when
\[
\frac{n}{\gcd(n,k)}\equiv2\pmod4.
\]
Finally, for the consecutive quartic family
\[
R_n(x)=\bigoplus_{i\in\mathbb Z_n}x_ix_{i+1}x_{i+2}x_{i+3},
\]
we determine all three measures in every residue class modulo $6$, using the sensitivity formulas and, in one residue class, finite-state results established in our companion preprint~\cite{LiZhangRSBFSensitivity}. In particular,
for $r=2,3$ and $6q+r\ge5$ we obtain strict separation,
\[
\boxed{s(R_{6q+r})=6q<bs(R_{6q+r})=C(R_{6q+r})=6q+1,\qquad r=2,3.}
\]
Thus this family gives an infinite collection of rotation symmetric Boolean
functions for which sensitivity is strictly smaller than block sensitivity,
while block sensitivity and certificate complexity coincide.
\end{abstract}

\section{Introduction}

A Boolean function on $n$ variables is a map
\[
f:\{0,1\}^n\longrightarrow\{0,1\}.
\]
Among the fundamental complexity measures associated with Boolean functions
are sensitivity, block sensitivity, and certificate complexity.  Block
sensitivity was introduced by Nisan in connection with decision-tree and
parallel computation complexity \cite{Nisan1991}; the relations among these
measures have since played a central role in Boolean-function complexity,
including the sensitivity conjecture of Nisan and Szegedy
\cite{NisanSzegedy1994}, proved by Huang \cite{Huang2019}.  For every
Boolean function $f$,
\[
s(f)\le bs(f)\le C(f).
\]
Exact formulas for these measures are nevertheless known only for special
classes.  For example, sensitivity and block sensitivity of elementary
symmetric Boolean functions were studied in \cite{ZhangLiAdeyeye2021}, and
certificate complexity in \cite{ZhangLi2022,ZhangLi2026Cert}.  General
quantitative relations between sensitivity and certificate complexity were
developed by He et al.~\cite{HeEtAl2019}.

Rotation symmetric Boolean functions form an important class of Boolean
functions invariant under cyclic permutations of their variables.  Their
counting and cryptographic properties were studied systematically by
St\u{a}nic\u{a} and Maitra \cite{StanicaMaitra2008}; subsequent work on
monomial rotation symmetric functions includes affine-equivalence and weight
questions for quartic functions \cite{CusickCheon2014}, and detailed work on
quadratic MRS functions \cite{ChirvasituCusick2024}.  Higher-order derivative
structure, linear structures, and fast points of RSBFs have also been studied
by Sun and Shi \cite{SunShi2024}.  From the complexity
point of view, cyclically invariant and more generally transitive Boolean
functions have been studied by Chakraborty \cite{Chakraborty2011} and, more
recently, by Chaubal and G\'al \cite{ChaubalGal2023}.  Recent work of
Chakraborty and G\'al also studies the number and structure of minimal
sensitive blocks \cite{ChakrabortyGal2026}.  These works concern general
bounds, constructions, or sensitive-block parameters rather than the exact
certificate- and block-sensitivity formulas for the monomial rotation
symmetric families considered here.

In our recent work on the sensitivity of rotation symmetric Boolean functions
\cite{LiZhangRSBFSensitivity}, general sensitivity results were obtained for
monomial rotation symmetric functions, including a complete determination for
quadratic functions and for the consecutive quartic family.  The purpose of
the present paper is to study block sensitivity and certificate complexity for
the same class of functions, and in particular to compare these measures with
the known sensitivity values. The quartic classification therefore uses the sensitivity results of~\cite{LiZhangRSBFSensitivity}, and its $n\equiv4\pmod6$ case additionally invokes the finite-state excursion classification and bounds proved there. We first formulate certificate complexity and
block sensitivity on the necklace space: the sensitive-block hypergraph at a
necklace is decomposed into cyclic difference families indexed by
opposite-value necklaces, so that certificate complexity is its transversal
number and block sensitivity is its matching number.  We also determine the
stabilizer-orbit structure of these difference families.  To the best of our
knowledge, exact block-sensitivity and certificate-complexity formulas for the
MRS families considered below have not previously been given.

The strict separation in this paper also distinguishes cyclic symmetry from
full permutation symmetry.  In the fully symmetric setting, it is known that
no nonconstant Boolean function satisfies $s(f)<bs(f)=C(f)$
\cite{LiZhangSymmetric2026}.  By contrast, the quadratic and consecutive
quartic MRS families below provide infinite families with exactly this
pattern.  Thus cyclic invariance alone permits a separation forbidden by
full symmetry.

Our first observation completely settles the odd-degree case. Every
odd-degree monomial rotation symmetric Boolean function has sensitivity
$n$. Since
\[
s(f)\le C(f)\le n,
\]
its certificate complexity is necessarily $n$.

The quadratic case is considerably more interesting. Let $Q_{n,k}$ denote
the rotation symmetric Boolean function generated by $x_0x_k$. We obtain an
exact pointwise description of $C(Q_{n,k},x)$ using an associated graph
consisting of disjoint cycles. This reduces the certificate problem to an
independence-number problem on certain induced subgraphs of cycles.

If
\[
d=\gcd(n,k),\qquad m=\frac nd,
\]
then, apart from the short-orbit case $k=n/2$,
\[
C(Q_{n,k})=
\begin{cases}
n,&4\mid m,\\
n-d,&4\nmid m.
\end{cases}
\]
In the short-orbit case, $C(Q_{n,n/2})=n$.

Comparison with the corresponding sensitivity formula gives
\[
s(Q_{n,k})<C(Q_{n,k})
\]
if and only if $k\ne n/2$ and
\[
m\equiv2\pmod4.
\]
In that case,
\[
C(Q_{n,k})-s(Q_{n,k})=d.
\]
We further prove that
\[
bs(Q_{n,k})=C(Q_{n,k})
\]
for every quadratic MRS function.  Thus the same congruence condition
\(m\equiv2\pmod4\) characterizes precisely when
\[
s(Q_{n,k})<bs(Q_{n,k})=C(Q_{n,k}).
\]

\section{Preliminaries}

Throughout, coordinates are indexed by
\[
\Z_n=\{0,1,\ldots,n-1\},
\]
with all subscripts interpreted modulo $n$.

For $x\in\{0,1\}^n$ and $S\subseteq\Z_n$, write $x^S$ for the input
obtained from $x$ by flipping all coordinates in $S$. When $S=\{i\}$, write
$x^{(i)}$.

The sensitivity of $f$ at $x$ is
\[
s(f,x)=|\{i\in\Z_n:f(x)\ne f(x^{(i)})\}|,
\qquad
s(f)=\max_x s(f,x).
\]

A certificate for $f$ at $x$ is a set $S\subseteq\Z_n$ such that every
$y\in\{0,1\}^n$ satisfying $y_i=x_i$ for all $i\in S$ also satisfies
$f(y)=f(x)$. The certificate complexity at $x$ is
\[
C(f,x)=\min\{|S|:S\text{ is a certificate for }f\text{ at }x\},
\]
and
\[
C(f)=\max_x C(f,x).
\]
For a nonempty set $B\subseteq\Z_n$, write $x^B$ for the input obtained
by flipping all coordinates in $B$. A block $B$ is \emph{sensitive} for $f$
at $x$ if
\[
f(x)\ne f(x^B).
\]
The block sensitivity $bs(f,x)$ is the maximum number of pairwise disjoint
sensitive blocks at $x$, and
\[
bs(f)=\max_x bs(f,x).
\]

It is standard that
\[
s(f,x)\le bs(f,x)\le C(f,x)
\]
for every input $x$, and consequently
\[
s(f)\le bs(f)\le C(f)\le n.
\]

The following standard hitting-set interpretation of certificate complexity
will also be useful; see, for example, Ben-David, Hatami, and
Tal~\cite{BenDavidHatamiTal2017}.  For $x\in\{0,1\}^n$, let
\[
\mathcal B_f(x)=
\{B\subseteq\Z_n:f(x^B)\ne f(x)\}.
\]
Thus $\mathcal B_f(x)$ is the family of sensitive blocks at $x$.

\begin{theorem}[Certificate complexity as a transversal number]
For every Boolean function $f$ and every input $x$,
\[
C(f,x)=\tau(\mathcal B_f(x)),
\]
where $\tau$ denotes the minimum size of a set meeting every member of
$\mathcal B_f(x)$.  Equivalently, it is enough to hit the
inclusion-minimal members of $\mathcal B_f(x)$.
\end{theorem}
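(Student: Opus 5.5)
The plan is to show that a set $S\subseteq\Z_n$ is a certificate for $f$ at $x$ if and only if $S$ meets every member of $\mathcal B_f(x)$. Minimizing $|S|$ on both sides then gives $C(f,x)=\tau(\mathcal B_f(x))$. The argument rests on one observation: every input $y$ can be written uniquely as $y=x^B$ with $B=\{i:y_i\ne x_i\}$. Moreover, $y$ agrees with $x$ on $S$ exactly when $B\cap S=\emptyset$.

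First, suppose $S$ is a certificate and let $B\in\mathcal B_f(x)$. If $B\cap S=\emptyset$, then $y=x^B$ agrees with $x$ on $S$, so the certificate property gives $f(x^B)=f(x)$. This contradicts $B$ being sensitive, so $S$ meets $B$.

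Conversely, suppose $S$ meets every sensitive block, and let $y$ agree with $x$ on $S$. Write $y=x^B$ with $B\cap S=\emptyset$. If $B=\emptyset$, then $y=x$ and there is nothing to show. Otherwise $B$ cannot be sensitive, since it misses $S$. Hence $f(y)=f(x)$, and $S$ is a certificate. Taking minima over $S$ on both sides of this equivalence yields the equality.

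For the final sentence, every member of $\mathcal B_f(x)$ contains an inclusion-minimal member, because the family is finite. Therefore a set meeting all minimal members meets all members. The reverse implication is trivial, so the two transversal numbers coincide.

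There is no real obstacle here. The only points needing care are the bijection $B\leftrightarrow y$ and the degenerate case $B=\emptyset$, which is never sensitive. One should also note that if $\mathcal B_f(x)$ is empty, as happens for constant $f$, then both sides equal $0$.
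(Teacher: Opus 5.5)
Your proof is correct and follows essentially the same route as the paper: both directions use the correspondence $y=x^B$ with $B=\{i:x_i\ne y_i\}$, and the minimal-member reduction uses the fact that every sensitive block contains an inclusion-minimal one. The only cosmetic difference is that you argue the converse directly rather than by contradiction, and you explicitly handle the degenerate cases $B=\varnothing$ and $\mathcal B_f(x)=\varnothing$, which the paper leaves implicit.
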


\begin{proof}
If $S$ is a certificate at $x$, then $S$ must meet every
$B\in\mathcal B_f(x)$; otherwise $x^B$ agrees with $x$ on all coordinates
of $S$ but satisfies $f(x^B)\ne f(x)$.

Conversely, suppose $S$ meets every member of $\mathcal B_f(x)$.  If $S$
were not a certificate, there would be an input $y$ agreeing with $x$ on
$S$ but satisfying $f(y)\ne f(x)$.  Let
\[
B=\{i\in\Z_n:x_i\ne y_i\}.
\]
Then $y=x^B$, so $B\in\mathcal B_f(x)$, while $B\cap S=\varnothing$, a
contradiction.  Hence $S$ is a certificate.  Finally, every sensitive block
contains an inclusion-minimal sensitive block, so the transversal number is
unchanged if only the inclusion-minimal members are retained.
\end{proof}

We use this known general viewpoint below as background for rotation-symmetric
refinements; the hitting-set characterization itself is not claimed as new.

For rotation symmetric functions, the same hypergraph admits a natural
decomposition by necklaces.  Let \(\mathcal N_n\) denote the set of binary
necklaces of length \(n\).  If \(X=[x]\) and \(Y=[y]\) are necklaces, define
\[
\Delta_k(x,y)=
\{i\in\Z_n:x_i\ne(R_ky)_i\},
\qquad k\in\Z_n,
\]
and let
\[
\mathcal D(X,Y)=
\{\Delta_k(x,y):k\in\Z_n\}.
\]
Changing the representative \(y\) only reindexes \(k\). Changing the
representative of \(X\) rotates every member of the family by a common cyclic
translation. Thus \(\mathcal D(X,Y)\) is well defined up to cyclic
translation, and in particular its transversal and matching numbers are
independent of the chosen representatives.

For a rotation symmetric Boolean function \(f\), write
\(\bar f(X)=f(x)\), and define the opposite-necklace difference hypergraph
\[
\mathcal H_f(X)=
\bigcup_{\substack{Y\in\mathcal N_n\\
\bar f(Y)\ne\bar f(X)}}\mathcal D(X,Y).
\]

\begin{theorem}[Opposite-necklace difference formula]
Let \(f\) be a rotation symmetric Boolean function and \(X=[x]\).  Then
\[
\boxed{\mathcal H_f(X)=\mathcal B_f(x).}
\]
Consequently,
\[
\boxed{C(f,X)=\tau(\mathcal H_f(X))}
\]
and
\[
\boxed{bs(f,X)=\nu(\mathcal H_f(X)),}
\]
where \(\nu\) denotes matching number.  In particular,
\[
\boxed{
C(f)=\max_{X\in\mathcal N_n}\tau(\mathcal H_f(X)),
\qquad
bs(f)=\max_{X\in\mathcal N_n}\nu(\mathcal H_f(X)).
}
\]
\end{theorem}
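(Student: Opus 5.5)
The plan is to prove the set identity $\mathcal H_f(X)=\mathcal B_f(x)$ directly by double inclusion, working with the fixed representative $x$ of $X$. (If the representative of $X$ is changed, both sides are translated by the same rotation, so nothing depends on this choice.) Write $R_k$ for the cyclic rotation used in the definition of $\Delta_k$. The key observation is that for any $y$ and $k$, the set $\Delta_k(x,y)$ is exactly the set $B$ with $x^{B}=R_ky$. Indeed, $x^B$ agrees with $x$ off $B$ and disagrees on $B$, and $R_ky$ disagrees with $x$ precisely on $\Delta_k(x,y)$.

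For the inclusion $\mathcal H_f(X)\subseteq\mathcal B_f(x)$, take a member $\Delta_k(x,y)$ with $\bar f([y])\ne\bar f(X)$. Rotation symmetry gives $f(R_ky)=f(y)=\bar f([y])$. Then
\[
f\bigl(x^{\Delta_k(x,y)}\bigr)=f(R_ky)\ne f(x),
\]
so $\Delta_k(x,y)$ is a sensitive block.

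For the reverse inclusion, take $B\in\mathcal B_f(x)$ and set $y=x^B$. Then $\bar f([y])=f(y)\ne f(x)$, so $[y]$ is an opposite-value necklace. Choosing $k=0$ gives $\Delta_0(x,y)=B$, hence $B\in\mathcal D(X,[y])\subseteq\mathcal H_f(X)$.

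One small point needs checking. The emptiness of $\Delta_k$ never causes trouble, because on an opposite necklace $R_ky\ne x$, so $\Delta_k(x,y)\neq\varnothing$, which matches the nonemptiness required of blocks.

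With the set identity in hand, the remaining claims follow quickly. For certificate complexity, the transversal theorem proved above gives $C(f,x)=\tau(\mathcal B_f(x))=\tau(\mathcal H_f(X))$. For block sensitivity, $bs(f,x)$ is by definition the maximum number of pairwise disjoint members of $\mathcal B_f(x)$, which is $\nu(\mathcal B_f(x))=\nu(\mathcal H_f(X))$.

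Both quantities are constant on necklaces. The reason is that $\mathcal B_f(R_jx)$ is the rotation of $\mathcal B_f(x)$ by $j$, using $f(R_jz)=f(z)$, and rotation preserves $\tau$ and $\nu$. So the notation $C(f,X)$ and $bs(f,X)$ is justified. Taking maxima over $x$, equivalently over $X\in\mathcal N_n$, yields the final formulas.

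The only step needing care, and not a real obstacle, is the bookkeeping of representatives: making sure the hypergraph is considered for the fixed representative $x$ and not merely up to translation. Since $\tau$ and $\nu$ are translation invariant, this does not affect the numerical conclusions.
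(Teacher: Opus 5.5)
Your proposal is correct and follows essentially the same route as the paper: double inclusion via $x^{\Delta_k(x,y)}=R_ky$ and rotation symmetry, the choice $y=x^B$, $k=0$ for the reverse inclusion, and then the transversal characterization, the matching interpretation, and rotation invariance for the remaining claims. Your extra checks, the nonemptiness of $\Delta_k$ on opposite necklaces and the explicit justification that $\mathcal B_f(R_jx)$ is a rotation of $\mathcal B_f(x)$, are harmless refinements of points the paper states only briefly.
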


\begin{proof}
Fix \(X=[x]\).  If
\[
B=\Delta_k(x,y)\in\mathcal D(X,Y)
\]
for a necklace \(Y=[y]\) with \(\bar f(Y)\ne\bar f(X)\), then
\[
x^B=R_ky.
\]
Rotation symmetry gives
\[
f(x^B)=f(R_ky)=f(y)=\bar f(Y)\ne\bar f(X)=f(x).
\]
Hence \(B\in\mathcal B_f(x)\), and therefore
\[
\mathcal H_f(X)\subseteq\mathcal B_f(x).
\]

Conversely, let \(B\in\mathcal B_f(x)\) and put \(y=x^B\).  Then
\(f(y)\ne f(x)\).  If \(Y=[y]\), rotation symmetry implies
\[
\bar f(Y)\ne\bar f(X),
\]
and
\[
B=\{i:x_i\ne y_i\}=\Delta_0(x,y).
\]
Thus \(B\in\mathcal D(X,Y)\subseteq\mathcal H_f(X)\).  Hence
\[
\mathcal B_f(x)\subseteq\mathcal H_f(X),
\]
proving equality.

The certificate formula follows from the preceding transversal
characterization.  The block-sensitivity formula follows because a family
of pairwise disjoint sensitive blocks is exactly a matching in
\(\mathcal B_f(x)=\mathcal H_f(X)\).  Finally, both quantities are invariant
under rotation of \(x\), so maximizing over inputs is equivalent to
maximizing over necklaces.
\end{proof}

\begin{remark}
The transversal/matching interpretation of sensitive blocks is standard for
general Boolean functions.  The theorem above records the additional
rotation-symmetric structure by decomposing the sensitive-block hypergraph
into cyclic difference families indexed by opposite-value necklaces.
\end{remark}

We next describe the internal orbit structure of a single difference
family.  For a word \(z\in\{0,1\}^n\), write
\[
H_z=\{a\in\Z_n:R_az=z\}
\]
for its rotational stabilizer, and put \(h_z=|H_z|\).

\begin{theorem}[Difference-orbit structure]
Let \(X=[x]\) and \(Y=[y]\) be binary necklaces of length \(n\).  Then
\[
\Delta_k(x,y)=\Delta_\ell(x,y)
\quad\Longleftrightarrow\quad
\ell-k\in H_y.
\]
Consequently,
\[
\boxed{|\mathcal D(X,Y)|=\frac{n}{h_y}=|Y|.}
\]

Moreover, \(H_x\) acts on \(\mathcal D(X,Y)\) by cyclic translation of
blocks.  Every \(H_x\)-orbit has size
\[
\boxed{\frac{h_x}{|H_x\cap H_y|},}
\]
and the number of such orbits is
\[
\boxed{
\frac{n|H_x\cap H_y|}{h_xh_y}
=
\frac{n}{|H_x+H_y|}.
}
\]
Since \(\Z_n\) is cyclic,
\[
|H_x\cap H_y|=\gcd(h_x,h_y),
\qquad
|H_x+H_y|=\operatorname{lcm}(h_x,h_y),
\]
so equivalently
\[
\boxed{
\text{orbit size}=
\frac{h_x}{\gcd(h_x,h_y)},
\qquad
\text{number of orbits}=
\frac{n}{\operatorname{lcm}(h_x,h_y)}.
}
\]
\end{theorem}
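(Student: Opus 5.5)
The plan is to work directly with the definition $\Delta_k(x,y)=\{i:x_i\ne(R_ky)_i\}$, using the convention $(R_ky)_i=y_{i+k}$. The first observation is that a difference set determines the rotated word: since $R_ky=x^{\Delta_k(x,y)}$,
\[
\Delta_k(x,y)=\Delta_\ell(x,y)\iff R_ky=R_\ell y\iff R_{\ell-k}y=y\iff \ell-k\in H_y .
\]
So $k\mapsto\Delta_k(x,y)$ induces a bijection from $\Z_n/H_y$ onto $\mathcal D(X,Y)$. This gives $|\mathcal D(X,Y)|=n/h_y$, and by orbit--stabilizer for the $\Z_n$-action on words this equals $|Y|$.

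Next I would establish the $H_x$-action. For $a\in H_x$, shifting the index set gives
\[
\Delta_k(x,y)-a=\{i:x_{i+a}\ne y_{i+a+k}\}=\{i:x_i\ne y_{i+a+k}\}=\Delta_{k+a}(x,y).
\]
Hence translation by $a$ maps $\mathcal D(X,Y)$ to itself, and it clearly defines a group action. Under the bijection $\mathcal D(X,Y)\cong\Z_n/H_y$, this action is just the translation action of $H_x$ on cosets, $k+H_y\mapsto k+a+H_y$.

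The orbit computations then reduce to elementary group theory in the abelian group $\Z_n$. The stabilizer of a coset $k+H_y$ in $H_x$ is $H_x\cap H_y$, independent of $k$, so every orbit has size $h_x/|H_x\cap H_y|$. The orbit of $k+H_y$ is $k+H_x+H_y$ read modulo $H_y$, so the orbits correspond to cosets of $H_x+H_y$. This gives $n/|H_x+H_y|$ orbits. Equivalently, dividing $n/h_y$ by the orbit size yields $n|H_x\cap H_y|/(h_xh_y)$, consistent with $|H_x+H_y|=h_xh_y/|H_x\cap H_y|$.

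Finally, in the cyclic group $\Z_n$ each subgroup is determined by its order: the subgroup of order $h$ is $(n/h)\Z_n$, and orders of subgroups divide $n$. Thus $H_x\cap H_y$ is the unique subgroup of order $\gcd(h_x,h_y)$, since a subgroup lies in both exactly when its order divides both $h_x$ and $h_y$. Likewise $H_x+H_y$ is the smallest subgroup containing both, of order $\operatorname{lcm}(h_x,h_y)$.

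The only delicate point I expect is bookkeeping of the sign and direction conventions for $R_k$ and for block translation. One must check that translation by $a\in H_x$ really lands on some $\Delta_{k\pm a}$, and that the action is well defined on blocks rather than merely on indices. This is handled by the coset description above, since it identifies the action on blocks with the action on $\Z_n/H_y$ where well-definedness is automatic.
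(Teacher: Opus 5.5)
Your proposal is correct and follows essentially the same route as the paper. You identify $\mathcal D(X,Y)$ with $\Z_n/H_y$ via $R_ky=R_\ell y\iff \ell-k\in H_y$, check that $H_x$ acts by translation with stabilizer $H_x\cap H_y$, apply orbit--stabilizer, and use that subgroups of $\Z_n$ are determined by their orders. The only differences are cosmetic: you fix the rotation convention explicitly, so your identity reads $\Delta_k(x,y)-a=\Delta_{k+a}(x,y)$ rather than the paper's $+a$, and you count orbits directly as cosets of $H_x+H_y$ instead of dividing $n/h_y$ by the common orbit size.
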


\begin{proof}
By definition,
\[
\Delta_k(x,y)=\{i:x_i\ne(R_ky)_i\}.
\]
Thus
\[
\Delta_k(x,y)=\Delta_\ell(x,y)
\]
if and only if
\[
x\oplus R_ky=x\oplus R_\ell y,
\]
which is equivalent to
\[
R_ky=R_\ell y.
\]
Hence
\[
R_{\ell-k}y=y,
\]
or equivalently \(\ell-k\in H_y\).  Therefore the distinct members of
\(\mathcal D(X,Y)\) are naturally indexed by the cosets of \(H_y\) in
\(\Z_n\), giving
\[
|\mathcal D(X,Y)|=\frac{n}{h_y}.
\]

Now let \(a\in H_x\).  Since \(R_ax=x\), rotation of a difference block by
\(a\) gives
\[
\Delta_k(x,y)+a=\Delta_{k+a}(x,y)
\]
with the chosen rotation convention.  Thus \(H_x\) acts on
\(\mathcal D(X,Y)\).

The stabilizer in \(H_x\) of the block \(\Delta_k(x,y)\) consists of those
\(a\in H_x\) for which
\[
\Delta_{k+a}(x,y)=\Delta_k(x,y).
\]
By the first part, this is exactly \(H_x\cap H_y\).  The orbit-stabilizer
theorem therefore gives orbit size
\[
\frac{h_x}{|H_x\cap H_y|}.
\]
Dividing the total number \(n/h_y\) of distinct blocks by this orbit size
gives
\[
\frac{n|H_x\cap H_y|}{h_xh_y}.
\]
For finite subgroups,
\[
|H_x+H_y|
=
\frac{|H_x||H_y|}{|H_x\cap H_y|},
\]
which yields the second displayed form.  Finally, subgroups of the cyclic
group \(\Z_n\) are uniquely determined by their orders, so their
intersection and generated subgroup have orders
\(\gcd(h_x,h_y)\) and \(\operatorname{lcm}(h_x,h_y)\), respectively.
\end{proof}

\begin{remark}
Thus the opposite-necklace formula may be organized orbit by orbit.  For a
fixed pair \(X,Y\), the family \(\mathcal D(X,Y)\) has exactly
\(n/h_y\) distinct blocks, arranged into
\[
\frac{n}{\operatorname{lcm}(h_x,h_y)}
\]
orbits under the stabilizer of \(X\).  This gives a structural compression
of the sensitive-block hypergraph whenever \(X\) has nontrivial rotational
symmetry.
\end{remark}

The inequalities \(s\le bs\le C\) allow the certificate framework above to
determine block sensitivity in many cases without a separate optimization.

\begin{corollary}[Sandwich criterion]
Let \(f\) be rotation symmetric and \(X=[x]\).  If
\[
s(f,X)=C(f,X),
\]
then
\[
\boxed{s(f,X)=bs(f,X)=C(f,X).}
\]
At the global level, if
\[
s(f)=C(f),
\]
then
\[
\boxed{s(f)=bs(f)=C(f).}
\]
\end{corollary}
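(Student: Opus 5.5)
The plan is a direct squeeze argument based on the standard chain of inequalities from the Preliminaries:
\[
s(f,x)\le bs(f,x)\le C(f,x)
\qquad\text{and}\qquad
s(f)\le bs(f)\le C(f).
\]
No rotation-specific structure is needed beyond one point. The pointwise quantities must be well defined on necklaces, that is, $s(f,X)$, $bs(f,X)$ and $C(f,X)$ must not depend on the chosen representative $x$ of $X=[x]$.

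First I settle this well-definedness. For $C$ and $bs$ it was already observed at the end of the proof of the opposite-necklace difference formula, since both equal $\tau$ and $\nu$ of $\mathcal H_f(X)$, and this hypergraph depends only on $X$ up to cyclic translation. For $s$ I argue directly. If $f$ is rotation symmetric, then $f(R_a x)=f(x)$ and $f((R_a x)^{(i+a)})=f(R_a(x^{(i)}))=f(x^{(i)})$ with the matching index convention. Hence the sensitive coordinates of $R_a x$ are the translates of those of $x$, and $s(f,R_a x)=s(f,x)$.

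Second, for the pointwise statement I fix a representative $x$ of $X$ and write the chain
\[
s(f,X)=s(f,x)\le bs(f,x)\le C(f,x)=C(f,X).
\]
The hypothesis $s(f,X)=C(f,X)$ forces both inequalities to be equalities, so $s(f,X)=bs(f,X)=C(f,X)$.

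Third, the global statement follows the same way from $s(f)\le bs(f)\le C(f)$: the hypothesis $s(f)=C(f)$ collapses the chain. No appeal to the pointwise version is needed.

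There is no real obstacle here. The only care required is the index bookkeeping in the invariance of $s(f,\cdot)$ under rotation, and that is routine.
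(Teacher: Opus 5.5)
Your proposal is correct and is the paper's proof: the corollary follows at once from the chains $s(f,X)\le bs(f,X)\le C(f,X)$ and $s(f)\le bs(f)\le C(f)$. Your extra check that $s(f,\cdot)$, $bs(f,\cdot)$ and $C(f,\cdot)$ are invariant under rotation, so that they are well defined on necklaces, is a harmless piece of care that the paper leaves implicit.
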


\begin{proof}
Both statements follow immediately from
\[
s(f,X)\le bs(f,X)\le C(f,X)
\]
and
\[
s(f)\le bs(f)\le C(f),
\]
respectively.
\end{proof}

For a fixed input \(x\), let
\[
S_f(x)=\{i\in\Z_n:f(x^{(i)})\ne f(x)\}
\]
be the sensitive-coordinate set, and put
\[
N_f(x)=\Z_n\setminus S_f(x).
\]

\begin{theorem}[Nonsensitive-block criterion]
Let \(f\) be a rotation symmetric Boolean function and \(X=[x]\).  Put
\[
r=C(f,X)-s(f,X).
\]
Suppose there exist \(r\) pairwise disjoint sensitive blocks
\[
B_1,\ldots,B_r\subseteq N_f(x).
\]
Then
\[
\boxed{bs(f,X)=C(f,X).}
\]
Equivalently, it is enough to find \(r\) pairwise disjoint difference
blocks
\[
B_j=\Delta_{k_j}(x,y_j)
\]
with
\[
\bar f([y_j])\ne\bar f(X)
\qquad\text{and}\qquad
B_j\subseteq N_f(x).
\]
\end{theorem}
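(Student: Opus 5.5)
The plan is to build an explicit family of \(C(f,X)\) pairwise disjoint sensitive blocks at \(x\). Combined with \(bs(f,X)\le C(f,X)\), this gives equality. The natural family is the singletons \(\{i\}\) for \(i\in S_f(x)\) together with the given blocks \(B_1,\ldots,B_r\). Write \(s=s(f,X)\), so \(|S_f(x)|=s(f,x)=s\), and this is independent of the representative of \(X\) by rotation symmetry.

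First I check disjointness. The singletons \(\{i\}\), \(i\in S_f(x)\), are pairwise disjoint. Each \(B_j\) lies in \(N_f(x)=\Z_n\setminus S_f(x)\), so it is disjoint from every singleton. The \(B_j\) are pairwise disjoint by hypothesis. Next I check sensitivity. Each singleton \(\{i\}\) with \(i\in S_f(x)\) is sensitive by the definition of \(S_f(x)\). Each \(B_j\) is sensitive by hypothesis. Hence we have \(s+r=C(f,X)\) pairwise disjoint sensitive blocks, so \(bs(f,X)\ge C(f,X)\). The reverse inequality \(bs(f,X)\le C(f,X)\) is standard, and this gives the boxed equality.

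For the ``equivalently'' clause I invoke the Opposite-necklace difference formula, which states \(\mathcal H_f(X)=\mathcal B_f(x)\). Under this identity, a block \(B\) is sensitive at \(x\) exactly when \(B=\Delta_k(x,y)\) for some necklace \([y]\) with \(\bar f([y])\ne\bar f(X)\). So the hypothesis can be restated as the existence of such difference blocks contained in \(N_f(x)\), and conversely. One degenerate case needs attention: the hypergraph formalism allows a block to be empty, but \(\Delta_k(x,y)=\varnothing\) would force \(R_ky=x\) and hence \(\bar f([y])=\bar f(X)\), so every block arising this way is nonempty.

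I expect no real obstacle. The only point that needs care is that \(s(f,x)\) and \(C(f,x)\) do not depend on the chosen representative \(x\) of \(X\), so that \(r\) and \(N_f(x)\) are well defined. This follows from rotation symmetry, since rotating \(x\) rotates \(S_f(x)\) and every certificate by the same translation.
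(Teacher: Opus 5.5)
Your proposal is correct and follows essentially the same route as the paper: the \(s(f,X)\) sensitive singletons together with the \(r\) given blocks inside \(N_f(x)\) form \(C(f,X)\) pairwise disjoint sensitive blocks, and the equivalent formulation comes from the identity \(\mathcal H_f(X)=\mathcal B_f(x)\). Your extra remarks, that opposite-necklace difference blocks are automatically nonempty and that the quantities do not depend on the choice of representative, are valid details the paper leaves implicit.
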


\begin{proof}
The \(s(f,X)\) sensitive coordinates give \(s(f,X)\) pairwise disjoint
sensitive singleton blocks
\[
\{i\},\qquad i\in S_f(x).
\]
By assumption, the blocks \(B_1,\ldots,B_r\) are pairwise disjoint and lie
entirely in \(N_f(x)\), so they are also disjoint from all of these
sensitive singleton blocks.  Hence
\[
bs(f,X)\ge s(f,X)+r=C(f,X).
\]
Since always \(bs(f,X)\le C(f,X)\), equality follows.

The equivalent necklace formulation follows from the
opposite-necklace difference theorem, which identifies the sensitive blocks
at \(x\) with the members of \(\mathcal H_f(X)\).
\end{proof}

\begin{corollary}
If
\[
C(f,X)=s(f,X)+1
\]
and the nonsensitive-coordinate set \(N_f(x)\) contains a sensitive block,
then
\[
\boxed{bs(f,X)=C(f,X)=s(f,X)+1.}
\]
\end{corollary}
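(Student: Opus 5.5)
This is the special case $r=1$ of the Nonsensitive-block criterion, so the plan is to check its hypotheses and invoke it directly. Put $r=C(f,X)-s(f,X)$. The assumption $C(f,X)=s(f,X)+1$ gives $r=1$. The criterion then asks for a single sensitive block $B_1\subseteq N_f(x)$, since one block is trivially pairwise disjoint, and that block exists by hypothesis.

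Carrying this out takes two steps. First, the $s(f,X)$ singletons $\{i\}$ with $i\in S_f(x)$ are pairwise disjoint sensitive blocks. Second, $B_1$ is disjoint from all of them because $B_1\subseteq N_f(x)=\Z_n\setminus S_f(x)$. Together these give
\[
bs(f,X)\ge s(f,X)+1=C(f,X).
\]
Combined with the standard bound $bs(f,X)\le C(f,X)$, this yields
\[
bs(f,X)=C(f,X)=s(f,X)+1.
\]

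I expect no real obstacle. The only point to state explicitly is that the hypothesis ``$N_f(x)$ contains a sensitive block'' means a nonempty $B\subseteq N_f(x)$ with $f(x^B)\ne f(x)$. Because this is a block inside the nonsensitive coordinates, it has size at least $2$, though that fact is not needed.

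If one prefers the necklace language, the equivalent formulation of the criterion lets one exhibit instead a difference block $\Delta_k(x,y)\subseteq N_f(x)$ with $\bar f([y])\ne\bar f(X)$. The Opposite-necklace difference formula identifies such blocks with the sensitive blocks at $x$.
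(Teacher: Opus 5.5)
Your proof is correct and matches the paper's intended argument. The corollary is the case $r=1$ of the Nonsensitive-block criterion, and you reproduce that criterion's proof: the $s(f,X)$ sensitive singletons plus one extra sensitive block inside $N_f(x)$ give $bs(f,X)\ge s(f,X)+1$, and then $bs(f,X)\le C(f,X)$ closes the argument.
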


\begin{remark}
The criterion explains the mechanism in several exact families below.  When
\(s<C\), the sensitive singleton blocks already account for \(s\) disjoint
blocks; the remaining task is to pack enough additional sensitive blocks
entirely inside the nonsensitive coordinates.  In the quadratic and
consecutive quartic families, this is precisely how the strict cases
\(s<bs=C\) arise.
\end{remark}

\subsection*{Two examples}

The following small examples illustrate the two roles of the general
framework.

\begin{example}[An opposite-necklace difference family]
Consider
\[
Q_{6,1}(x)=\bigoplus_{i\in\Z_6}x_ix_{i+1},
\]
and let
\[
X=[000000],\qquad Y=[000011].
\]
We have
\[
Q_{6,1}(000000)=0,\qquad Q_{6,1}(000011)=1,
\]
so \(Y\) is an opposite-value necklace for \(X\).  Taking \(x=000000\)
and \(y=000011\), the six difference blocks are
\[
\mathcal D(X,Y)=
\bigl\{
\{4,5\},\{3,4\},\{2,3\},\{1,2\},\{0,1\},\{0,5\}
\bigr\}.
\]
Thus this single opposite necklace contributes the six edges of the cycle
\(C_6\) to the sensitive-block hypergraph at \(X\).

Here \(H_x=\Z_6\), while \(y\) has trivial stabilizer.  The
difference-orbit theorem therefore predicts six distinct blocks forming one
\(H_x\)-orbit of size six, exactly as displayed above.  This example also
shows concretely how an opposite necklace contributes an entire cyclic
family of sensitive blocks rather than a single block.
\end{example}

\begin{example}[The nonsensitive-block criterion for \(Q_{6,1}\)]
Again consider \(Q_{6,1}\), now at
\[
x=000011.
\]
The Boolean derivative is
\[
D_jQ_{6,1}(x)=x_{j-1}\oplus x_{j+1}.
\]
At this input the sensitive coordinates are
\[
S_f(x)=\{0,3,4,5\},
\]
while
\[
N_f(x)=\{1,2\}.
\]
Thus
\[
s(Q_{6,1},x)=4.
\]
The four sensitive coordinates give four pairwise disjoint sensitive
singleton blocks.  Although coordinates \(1\) and \(2\) are individually
nonsensitive, flipping them simultaneously changes the value:
\[
Q_{6,1}(000011)=1,
\qquad
Q_{6,1}(011011)=0.
\]
Hence
\[
B=\{1,2\}\subseteq N_f(x)
\]
is an additional sensitive block, disjoint from all four sensitive
singletons.  Therefore
\[
bs(Q_{6,1},x)\ge5.
\]
The quadratic certificate theorem below gives
\[
C(Q_{6,1})=5,
\]
so
\[
\boxed{
s(Q_{6,1})=4<bs(Q_{6,1})=C(Q_{6,1})=5.
}
\]
This is the smallest quadratic illustration of the nonsensitive-block
criterion.
\end{example}

For $A\subseteq\Z_n$, let
\[
O(A)=\{A+t:t\in\Z_n\}
\]
denote its orbit under cyclic translation. The monomial rotation symmetric
Boolean function generated by $A$ is
\[
M_A(x)=\bigoplus_{B\in O(A)}\prod_{j\in B}x_j,
\]
where each distinct translated monomial is included exactly once.

\section{Odd-Degree Monomial Rotation Symmetric Functions}

\begin{theorem}[Odd-degree MRS functions]
Let $M_A$ be a monomial rotation symmetric Boolean function on $n$
variables. If $|A|$ is odd, then
\[
\boxed{s(M_A)=bs(M_A)=C(M_A)=n.}
\]
\end{theorem}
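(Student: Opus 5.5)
The plan is to exhibit a single input at which every coordinate is sensitive, so that $s(M_A)=n$, and then apply the Sandwich criterion together with the trivial bound $C(M_A)\le n$. The input I would use is the all-ones vector $\mathbf 1$. At $\mathbf 1$ every monomial $\prod_{j\in B}x_j$ with $B\in O(A)$ equals $1$. Flipping coordinate $j$ to $0$ kills exactly those monomials with $j\in B$ and leaves all others equal to $1$. Hence
\[
M_A(\mathbf 1)\oplus M_A(\mathbf 1^{(j)})\equiv |\{B\in O(A): j\in B\}|\pmod 2,
\]
and it suffices to show that this count is odd for every $j\in\Z_n$.

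To count, let $H_A=\{t\in\Z_n: A+t=A\}$ be the stabilizer of $A$ and $h=|H_A|$, so that $|O(A)|=n/h$. Translation by $\Z_n$ acts transitively on coordinates and permutes $O(A)$. Therefore the number $c_j$ of orbit members containing $j$ is independent of $j$. Double counting the incidences $(j,B)$ with $j\in B\in O(A)$ gives
\[
n\,c=\frac{n}{h}\,|A|,\qquad\text{so}\qquad c=\frac{|A|}{h}.
\]

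The key step is the parity of $|A|/h$. Since $A+H_A=A$, the set $A$ is a union of cosets of the subgroup $H_A$, so $h$ divides $|A|$. Because $|A|$ is odd, $h$ is odd, and hence $|A|/h$ is odd as well: an odd number has only odd divisors, and the complementary factor is then also odd. Thus every coordinate is sensitive at $\mathbf 1$, so $s(M_A,\mathbf 1)=n$ and $s(M_A)=n$.

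Finally, $s(M_A)\le bs(M_A)\le C(M_A)\le n$, so $s(M_A)=C(M_A)=n$, and the Sandwich criterion gives $s=bs=C=n$. The only point requiring care is the handling of short orbits: each distinct translate is counted once, which is why the count is $|A|/h$ rather than $|A|$. The coset decomposition of $A$ resolves this.
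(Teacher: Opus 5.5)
Your proof is correct and follows essentially the same route as the paper: evaluate at the all-ones input, show that each coordinate lies in exactly $|A|/h$ distinct translates, and use $h\mid |A|$ to get oddness, then sandwich with $C\le n$. Your double-counting over all coordinates and your coset argument for $h\mid|A|$ fill in details that the paper states only briefly (it counts the translates through a fixed $j$ directly).
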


\begin{proof}
Let $h=|\{t\in\mathbb Z_n:A+t=A\}|$ be the size of the
translation stabilizer of $A$.  A fixed coordinate $j$ lies in exactly $|A|/h$ members of $O(A)$:
there are $|A|$ choices of a member of $A$ that can be translated to $j$,
and each orbit member is counted $h$ times. Since $h$ divides $|A|$,
the number $|A|/h$ is odd when $|A|$ is odd. At the all-one input,
$D_jM_A(1^n)$ equals that number modulo two, and therefore equals $1$
for every $j$.  Hence $s(M_A)=n$.  The standard inequalities
\[
s(M_A)\le bs(M_A)\le C(M_A)\le n
\]
force equality throughout.
\end{proof}

Thus all three measures are completely determined in odd degree; the
nontrivial behavior begins in even degree.

\section{Quadratic Monomial Rotation Symmetric Functions}

For
\[
1\le k\le\left\lfloor\frac n2\right\rfloor,
\]
let $Q_{n,k}$ denote the quadratic rotation symmetric Boolean function
generated by $x_0x_k$. Except when $n$ is even and $k=n/2$,
\[
Q_{n,k}(x)=\bigoplus_{i\in\Z_n}x_ix_{i+k}.
\]

Define
\[
G_{n,k}=(\Z_n,E_{n,k}),\qquad
E_{n,k}=\{\{i,i+k\}:i\in\Z_n\}.
\]
If $k\ne n/2$, then $G_{n,k}$ is the disjoint union of
$d=\gcd(n,k)$ cycles, each of length $m=n/d$.

For $x\in\{0,1\}^n$, define
\[
Z(x)=\{j\in\Z_n:x_{j-k}=x_{j+k}\}.
\]

\begin{lemma}
Assume $k\ne n/2$. For every $x\in\{0,1\}^n$,
\[
\boxed{C(Q_{n,k},x)=n-\alpha(G_{n,k}[Z(x)]),}
\]
where $\alpha(H)$ denotes the independence number of $H$.
\end{lemma}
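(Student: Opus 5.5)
The plan is to characterize directly which sets of coordinates can be left \emph{free} in a certificate, rather than going through the hypergraph \(\mathcal B_f(x)\). A set \(S\) is a certificate at \(x\) exactly when, after fixing \(y_i=x_i\) for \(i\in S\), the restricted function of the free variables \(T=\Z_n\setminus S\) is constant. I will show that this holds iff \(T\) is an independent set of \(G_{n,k}\) contained in \(Z(x)\). Minimizing \(|S|=n-|T|\) then gives exactly \(n-\alpha(G_{n,k}[Z(x)])\).

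First, since \(k\ne n/2\) and \(1\le k\), I will write \(Q_{n,k}(y)=\bigoplus_{\{i,j\}\in E_{n,k}} y_iy_j\). This expression has no loops and each edge appears exactly once, because \(i+k\neq i-k\). In particular every vertex \(j\) has exactly the two distinct neighbours \(j\pm k\).

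Next, fix \(y_i=x_i\) on \(S\) and expand \(Q_{n,k}\) as a multilinear polynomial over \(\mathrm{GF}(2)\) in the variables \(y_j\), \(j\in T\). The result is
\[
\bigoplus_{\{i,j\}\in E_{n,k},\,i,j\in T} y_iy_j\;\oplus\;\bigoplus_{j\in T} y_j\Bigl(\bigoplus_{i\in\{j-k,j+k\}\cap S}x_i\Bigr)\;\oplus\;\text{const}.
\]
The multilinear representation of a Boolean function is unique, so the restriction is constant iff every nonconstant coefficient vanishes. The quadratic part vanishes iff no edge lies inside \(T\), i.e.\ \(T\) is independent in \(G_{n,k}\). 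Given independence, both neighbours \(j\pm k\) of each \(j\in T\) lie in \(S\). The linear coefficient of \(y_j\) is then \(x_{j-k}\oplus x_{j+k}\), and it vanishes iff \(j\in Z(x)\).

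Hence the complements of certificates are exactly the independent subsets of \(G_{n,k}[Z(x)]\), and the formula follows. The only delicate points are the use of uniqueness of the multilinear representation, and the hypothesis \(k\ne n/2\). That hypothesis is what ensures each edge term appears once and that \(j-k\) and \(j+k\) are distinct, so the linear coefficient is exactly \(x_{j-k}\oplus x_{j+k}\). I expect no real obstacle beyond stating these carefully.
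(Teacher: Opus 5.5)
Your proposal is correct and follows essentially the same route as the paper's proof. Both arguments fix the specified coordinates and use uniqueness of the algebraic normal form: the quadratic term \(y_jy_{j+k}\) for two free adjacent coordinates cannot cancel, and the coefficient of an isolated free variable is \(x_{j-k}\oplus x_{j+k}\). You are somewhat more explicit than the paper about where \(k\ne n/2\) is used, namely distinct edge terms and distinct neighbours \(j\pm k\).
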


\begin{proof}
Let $U$ be the set of coordinates left unspecified by a certificate at
$x$. If two adjacent vertices $j$ and $j+k$ both belong to $U$, then after
all coordinates outside $U$ are fixed, the algebraic normal form still
contains the quadratic term $x_jx_{j+k}$. No other term has the same pair
of free variables. Thus the restricted function cannot be constant.
Therefore $U$ must be independent in $G_{n,k}$.

If $j\in U$, then both $j-k$ and $j+k$ are fixed. The two terms involving
$x_j$ have sum
\[
x_{j-k}x_j\oplus x_jx_{j+k}
=x_j(x_{j-k}\oplus x_{j+k}).
\]
Thus the restricted function is independent of $x_j$ exactly when
$x_{j-k}=x_{j+k}$, so $j\in Z(x)$.

Conversely, if $U$ is an independent subset of $Z(x)$, no quadratic term
contains two free coordinates, and the coefficient of every free variable
is zero. Hence the restricted function is constant. Therefore the maximum
number of coordinates that may be left unspecified is
$\alpha(G_{n,k}[Z(x)])$, proving the formula.
\end{proof}

\begin{theorem}
Let $1\le k\le\lfloor n/2\rfloor$. If $n$ is even and $k=n/2$, then
\[
\boxed{C(Q_{n,k})=n.}
\]
Otherwise, put
\[
d=\gcd(n,k),\qquad m=\frac nd.
\]
Then
\[
\boxed{
C(Q_{n,k})=
\begin{cases}
n,&m\equiv0\pmod4,\\[1mm]
n-d,&m\not\equiv0\pmod4.
\end{cases}}
\]
\end{theorem}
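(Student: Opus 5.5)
The plan has two parts: treat the short-orbit case $k=n/2$ directly, and in every other case apply the preceding Lemma and reduce $\max_x C(Q_{n,k},x)$ to minimising $\alpha(G_{n,k}[Z(x)])$ over $x$.

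\emph{Short-orbit case.} Here $Q_{n,n/2}=\bigoplus_{i<n/2}x_ix_{i+n/2}$ is a parity of $n/2$ disjoint ANDs. At $x=1^n$, flipping any single coordinate destroys exactly one monomial and so changes the value. Hence $s(Q_{n,n/2},1^n)=n$, and $s\le C\le n$ forces $C=n$.

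\emph{General case, decoupling into cycles.} Assume $k\ne n/2$. Each component of $G_{n,k}$ is a cycle $j_0,j_0+k,\dots,j_0+(m-1)k$. Put $c_t=x_{j_0+tk}$. The condition $j\in Z(x)$ only involves the two cycle-neighbours of $j$. So on this cycle, $Z(x)$ is the zero set of
\[
e_t=c_{t-1}\oplus c_{t+1}, \qquad t\in\Z_m .
\]
The cycles are disjoint and have disjoint coordinate sets, so $\alpha(G_{n,k}[Z(x)])$ is the sum over the $d$ cycles of $\alpha(C_m[\{t:e_t=0\}])$, and each cycle can be optimised independently.

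\emph{Which $e$ are realisable.} The pattern $e$ is determined by $c$ under the following rule. Partition $\Z_m$ into the orbits of $t\mapsto t+2$. Then $e$ is realisable if and only if the sum of $e$ along each orbit is $0$ mod $2$; conversely, any such $e$ is realised by solving for $c$ step by step along each orbit.
\begin{itemize}
\item If $m$ is odd, there is one orbit, of length $m$.
\item If $m$ is even, there are two orbits, each of length $m/2$.
\end{itemize}

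\emph{Lower bound ($C=n$ when $4\mid m$).} Take $e\equiv1$ on every cycle. This is realisable exactly when every orbit has even length, that is, when $4\mid m$. Then $Z(x)=\varnothing$, so $\alpha=0$ and $C(Q_{n,k},x)=n$.

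\emph{Upper and lower bound when $4\nmid m$.} In this case some orbit on each cycle has odd length. An odd-length orbit with even sum must contain a $0$ of $e$. So every cycle contributes $\alpha\ge1$, which gives $C(Q_{n,k},x)\le n-d$ for every $x$. To attain $n-d$, make the zero set on each cycle a clique of $C_m$ (note $m\ge3$ because $k\ne n/2$).
\begin{itemize}
\item If $m$ is odd, put a single zero at $t=0$. The remaining $m-1$ ones have even sum.
\item If $m\equiv2\pmod4$, put zeros at $t=0$ and $t=1$. These lie in different orbits, and each odd-length orbit then has an even number of ones. The two vertices are adjacent, so $\alpha=1$.
\end{itemize}
Summing over the $d$ cycles gives $C(Q_{n,k},x)=n-d$, which proves the formula.

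\emph{Main obstacle.} The delicate step is the parity-realisability characterisation of $e$, together with the observation that the optimal zero set must be a clique of the cycle. Everything else is bookkeeping.
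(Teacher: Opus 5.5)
Your proof is correct and follows the paper's argument: you reduce via the Lemma to $\alpha(G_{n,k}[Z(x)])$, decouple the $d$ cycles, use a parity obstruction to force a vertex of $Z(x)$ on every cycle when $4\nmid m$, and attain $n-d$ with a one-vertex or adjacent-pair zero set on each cycle. The only difference is bookkeeping: the paper uses the first differences $y_i=a_i\oplus a_{i+1}$ (a single even-weight constraint, with $Z(x)$ given by equal consecutive pairs of $y$), whereas you use the second differences $e_t=y_{t-1}\oplus y_t$ directly, with one parity constraint per orbit of $t\mapsto t+2$.
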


\begin{proof}
In the short-orbit case,
\[
Q_{n,n/2}(x)=\bigoplus_{i=0}^{n/2-1}x_ix_{i+n/2}.
\]
At $1^n$ every coordinate is sensitive, so
$C(Q_{n,n/2})=n$.

Now suppose $k\ne n/2$. The graph $G_{n,k}$ consists of $d$ disjoint
cycles of length $m$. On one component write the input values in cyclic
order as
\[
a_0,a_1,\ldots,a_{m-1}
\]
and define
\[
y_i=a_i\oplus a_{i+1},\qquad i\pmod m.
\]
Since the sequence is cyclic,
\[
\bigoplus_{i=0}^{m-1}y_i=0. \tag{4.1}
\]
Moreover,
\[
a_{i-1}=a_{i+1}
\quad\Longleftrightarrow\quad
y_{i-1}=y_i. \tag{4.2}
\]

Suppose first that $4\mid m$. Choose
\[
y=(01)^{m/2}.
\]
It has $m/2$ ones, an even number, so (4.1) holds. No two cyclically
consecutive entries are equal. Thus $Z(x)$ is empty on this component.
Making this choice on every component gives $Z(x)=\varnothing$, and the
lemma yields $C(Q_{n,k},x)=n$. Hence $C(Q_{n,k})=n$.

Now assume $4\nmid m$. If a component contained no vertex of $Z(x)$, then
by (4.2) its transition sequence would alternate cyclically. Hence $m$
would be even. Such an alternating word has $m/2$ ones, and (4.1) would
force $m/2$ to be even. Thus $4\mid m$, a contradiction. Hence every
component contains at least one vertex of $Z(x)$. Choosing one from each
component gives an independent set, so
\[
\alpha(G_{n,k}[Z(x)])\ge d.
\]
Therefore
\[
C(Q_{n,k},x)\le n-d \tag{4.3}
\]
for every $x$.

It remains to attain equality. If $m=4q+1$, choose
\[
y=00(10)^{2q-1}1.
\]
It has $2q$ ones, and exactly one cyclic pair of consecutive entries is
equal. Thus the induced subgraph on $Z(x)$ has independence number $1$.

If $m=4q+3$, choose
\[
y=(01)^{2q+1}1.
\]
It has $2q+2$ ones, and again exactly one cyclic pair of consecutive
entries is equal. Hence the induced subgraph on $Z(x)$ has independence
number $1$.

Finally, let $m=4q+2$. The excluded short-orbit case is the only case
$m=2$, so here $m\ge6$. Choose
\[
y=000(10)^{2q-1}1.
\]
It has $2q$ ones. The equal consecutive pairs arise exactly inside the
initial block $000$, so $Z(x)$ consists of two adjacent vertices. Its
independence number is therefore $1$.

Using the appropriate construction independently on all $d$ components
gives
\[
\alpha(G_{n,k}[Z(x)])=d.
\]
The lemma now gives $C(Q_{n,k},x)=n-d$. Together with (4.3), this proves
the result.
\end{proof}

\begin{theorem}[Block sensitivity and certificate complexity of quadratic MRS functions]
Outside the short-orbit case, put
\[
d=\gcd(n,k),\qquad m=\frac nd.
\]
Then
\[
\boxed{
\begin{array}{c|ccc}
& s(Q_{n,k})&bs(Q_{n,k})&C(Q_{n,k})\\ \hline
m\equiv0\pmod4&n&n&n\\[1mm]
m\text{ odd}&n-d&n-d&n-d\\[1mm]
m\equiv2\pmod4&n-2d&n-d&n-d
\end{array}}
\]
In the short-orbit case \(n\) even and \(k=n/2\),
\[
\boxed{s(Q_{n,n/2})=bs(Q_{n,n/2})=C(Q_{n,n/2})=n.}
\]
Consequently,
\[
\boxed{bs(Q_{n,k})=C(Q_{n,k})}
\]
for every quadratic MRS function, and outside the short-orbit case
\[
\boxed{
s(Q_{n,k})<bs(Q_{n,k})=C(Q_{n,k})
\iff
\frac{n}{\gcd(n,k)}\equiv2\pmod4.
}
\]
\end{theorem}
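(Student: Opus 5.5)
The $C$ column is the preceding certificate theorem, and the $s$ column is the quadratic sensitivity formula of~\cite{LiZhangRSBFSensitivity}. Since $s\le bs\le C$, only $bs$ in the cases $s<C$ needs new work. The argument goes row by row, followed by the short-orbit case.

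\textbf{Rows where $s=C$.} If $m\equiv0\pmod4$, then $s=C=n$. If $m$ is odd, then $s=C=n-d$. In both cases the sandwich criterion gives $bs=s=C$. For the short-orbit case, the preceding certificate proof shows that every coordinate is sensitive at $1^n$: the derivative there is $x_{j+n/2}=1$. So $s=n$ and everything equals $n$.

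\textbf{Row $m\equiv2\pmod4$ (the main obstacle).} Here $s=n-2d<C=n-d$, so $r=d$, and I will use the nonsensitive-block criterion. By that criterion it suffices to find one input $x$ with $s(Q_{n,k},x)=n-2d$ and $d$ pairwise disjoint sensitive blocks inside $N_f(x)$. The derivative is
\[
D_jQ_{n,k}(x)=x_{j-k}\oplus x_{j+k},
\]
so $N_f(x)=Z(x)$.

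On each cycle component, use the construction from the certificate proof: the transition word $y=000(10)^{2q-1}1$. There $Z(x)$ is exactly two adjacent vertices $j, j+k$ of the component, so each component has $m-2$ sensitive coordinates, giving $s(Q_{n,k},x)=n-2d$ overall. This is the example $Q_{6,1}$ at $000011$ generalized.

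I then claim the edge block $B=\{j,j+k\}$ is sensitive. Flipping both coordinates changes $Q$ by
\[
(x_{j-k}\oplus x_{j+k})\oplus(x_j\oplus x_{j+2k})\oplus 1 .
\]
The first two brackets vanish because $j,j+k\in Z(x)$, so the total change is $1$. The blocks coming from different components are disjoint and lie in $N_f(x)$. Hence $bs(Q_{n,k},x)\ge (n-2d)+d=n-d=C$.

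The one thing to verify carefully is the bookkeeping that the two zero positions of the transition word correspond to two graph-adjacent vertices. Position $i$ of the transition word corresponds to the vertex between $a_i$ and $a_{i+1}$, and the two equal consecutive pairs in $000$ share the middle entry.

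\textbf{Final claims.} $bs=C$ for all quadratic MRS functions is now immediate. Reading off the table gives that, outside the short-orbit case, $s<bs$ holds exactly when $m\equiv2\pmod4$.
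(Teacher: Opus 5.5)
Your proof is correct and follows the paper's argument. The paper also gets the first two rows and the short-orbit case from the sandwich $s\le bs\le C$, and for $m\equiv2\pmod4$ it places on each component the input $0000(1100)^{t-1}11$, whose transition word is exactly your $000(10)^{2t-1}1$, using the two adjacent nonsensitive vertices as the extra sensitive block; the only cosmetic difference is that you check that block via the identity $(x_{j-k}\oplus x_{j+k})\oplus(x_j\oplus x_{j+2k})\oplus1=1$, whereas the paper evaluates directly on the local block $0000$.
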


\begin{proof}
The short-orbit case and the cases \(m\equiv0\pmod4\) and \(m\) odd follow
immediately from the already known sensitivity formula, the preceding
certificate-complexity theorem, and
\[
s(f)\le bs(f)\le C(f).
\]

It remains to consider \(m=4t+2\).  The graph \(G_{n,k}\) consists of
\(d\) disjoint \(m\)-cycles.  On each component, in its cyclic order, take
the binary word
\[
x=0000(1100)^{t-1}11.
\tag{4.4}
\]
Here the vertices of each component are reindexed in consecutive $k$-steps, so that $j-1$ and $j+1$ denote the two neighbors of $j$ in that component.
For the quadratic cycle function the derivative at a vertex \(j\) is
\[
D_jQ=x_{j-1}\oplus x_{j+1}.
\tag{4.5}
\]
A direct check of (4.4) shows that exactly two coordinates on the component,
the second and third positions of the initial block \(0000\), are
nonsensitive.  All other \(m-2\) coordinates therefore give pairwise
disjoint sensitive singleton blocks.

Let \(B=\{1,2\}\) be the pair consisting of those two nonsensitive
coordinates.  Locally the word begins with \(0000\).  Before the flip, the
three edge terms meeting \(B\) contribute
\[
x_0x_1\oplus x_1x_2\oplus x_2x_3=0.
\]
After simultaneously changing \(x_1=x_2=0\) to \(1\), their contribution is
\[
0\oplus1\oplus0=1.
\]
All other edge terms are unchanged.  Thus \(B\) is a sensitive block.
Hence one component supplies
\[
(m-2)+1=m-1
\]
pairwise disjoint sensitive blocks.

Use the same construction independently on all \(d\) components.  Since
different components are disjoint and the total quadratic function is the
XOR of the component functions, these blocks remain sensitive for the full
function.  Thus
\[
bs(Q_{n,k})\ge d(m-1)=n-d.
\]
The preceding certificate theorem gives
\[
C(Q_{n,k})=n-d,
\]
and therefore \(bs(Q_{n,k})=C(Q_{n,k})=n-d\).  Combining this with the
quadratic sensitivity formula gives the displayed table.
\end{proof}

\section{The Consecutive Quartic Family}

We next consider the consecutive quartic monomial rotation symmetric
Boolean function
\[
R_n(x)=\bigoplus_{i\in\Z_n}x_ix_{i+1}x_{i+2}x_{i+3}.
\]
The sensitivity of this family has a period-six structure. In particular,
if $n=6q+2$, then
\[
s(R_n)=6q=n-2.
\]
We show that both block sensitivity and certificate complexity are exactly
one larger on this infinite subfamily.

\begin{theorem}[Quartic separation theorem]
For every integer $q\ge1$,
\[
\boxed{
s(R_{6q+2})=6q<bs(R_{6q+2})=C(R_{6q+2})=6q+1.
}
\]
\end{theorem}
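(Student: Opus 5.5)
Since $s(R_{6q+2})=6q$ is taken from the sensitivity classification of \cite{LiZhangRSBFSensitivity}, the statement reduces to two claims: the upper bound $C(R_{6q+2})\le 6q+1$ and the lower bound $bs(R_{6q+2})\ge 6q+1$. Together with $s\le bs\le C$ these give the boxed chain.

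\textbf{Lower bound via the nonsensitive-block criterion.} I will look for an input $x$ with $s(R_n,x)=n-2=6q$ whose two nonsensitive coordinates form a sensitive block. The Corollary following the nonsensitive-block criterion then gives $bs(R_n,x)\ge 6q+1$. The derivative is
\[
D_jR_n(x)=\bigoplus_{t=0}^{3}\prod_{\substack{u=j-3+t\\ u\ne j}}^{j+t}x_u ,
\]
which is the XOR of the four triple products of the windows of length four containing $j$, with $x_j$ removed.

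I would try a word built from the extremal periodic pattern attaining $s=6q$ on $n=6q$, padded by two extra symbols. A first attempt is
\[
x=(111000)^{q}\,00 .
\]
I would compute the derivative pattern on this word, then adjust the padding and local symbols until exactly two adjacent coordinates $j,j+1$ are insensitive. After that I must check directly that flipping both of them toggles an odd number of quartic terms. This local check involves only windows meeting $\{j,j+1\}$, so it is a finite computation. If no such simple word works, I would use the finite-state results of the companion preprint to locate an input with $s(R_n,x)=n-2$ and examine its insensitive pair.

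\textbf{Upper bound: $C(R_n,x)\le n-1$ for every $x$.} Here I need, for each $x$, a single coordinate $j$ that can be left free. That means a position where every certificate-completion is constant; it suffices to fix all other coordinates, so $U=\{j\}$. The condition is exactly $D_jR_n(x)=0$. So I must show that no $x$ has $s(R_n,x)=n$, i.e.\ that $s(R_{6q+2},x)\le n-1$. This follows at once from $s(R_{6q+2})=6q<n$. Indeed, by the transversal theorem, leaving one nonsensitive coordinate free is valid, because the only sensitive block avoided by the certificate $\Z_n\setminus\{j\}$ would be $\{j\}$ itself.

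The upper bound is therefore routine. The main obstacle is the explicit construction of the witness word for the lower bound and the verification that it is valid for all $q\ge1$. To make that verification uniform in $q$, I would use a periodic word so that the derivative check reduces to a fixed number of positions near the padding plus one periodic segment.
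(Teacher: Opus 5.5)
\textbf{Gap: the lower-bound witness.} Your upper bound $C(R_n)\le n-1$ is exactly the paper's argument. Your plan for the lower bound is also the paper's: find $x$ with $s(R_n,x)=n-2$ whose two nonsensitive coordinates form a sensitive block. The gap is that this witness, which is the whole content of the lower bound, is never produced, and the search you describe cannot produce it.

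First, $111000$ is not the extremal pattern. A maximal run of at most three ones contains no all-ones window, so all its coordinates are nonsensitive, and the middle zero of $000$ is nonsensitive too. Hence in $(111000)^q00$ only about $n/3$ coordinates are sensitive. That defect occurs in every period, and no change of padding repairs it. The fully sensitive word for $n=6q$ is $(111100)^q$.

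Second, insisting on an \emph{adjacent} pair $j,j+1$ is fatal. For such a pair the mixed derivative is $D_jD_{j+1}R_n(x)=x_{j-2}x_{j-1}\oplus x_{j-1}x_{j+2}\oplus x_{j+2}x_{j+3}$. A short case check on $(x_j,x_{j+1})$ shows that this vanishes whenever $j,j+1$ are the only nonsensitive coordinates. The check uses the observation above together with the fact that along a maximal run of $L\ge4$ ones the derivative pattern depends only on $L$: it is $1,0,0,0,1$ for $L=5$ and $1,0,1,1,0,1$ for $L=6$. At any such input both coordinates can be left free, so $bs(R_n,x)\le C(R_n,x)\le n-2$. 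For example, $(111100)^{q-1}11110000$ has $s=n-2$, with the nonsensitive pair in the middle of $0000$, but flipping that pair changes no window.

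\textbf{The missing idea.} Allow the pair to sit at cyclic distance $3$. The paper takes $a=(111100)^q11$. Cyclically, the trailing $11$ merges with the leading $1111$ into one run of six ones, and all other runs are $1111$ or $00$. By the pattern $1,0,1,1,0,1$, the only nonsensitive coordinates are $2$ and $n-1$. The only window containing both is $\{n-1,0,1,2\}$, whose other two entries are $1$. Hence $D_2D_{n-1}R_n(a)=1$, so $\{2,n-1\}$ is a sensitive block disjoint from the $n-2$ sensitive singletons. This gives $bs(R_n,a)\ge n-1$, uniformly in $q$ because the word is periodic away from one short window. Your fallback of extracting an extremal input from the companion's finite-state classification might lead to this witness, but as written it is not a proof step.
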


\begin{proof}
Put $n=6q+2$ and consider
\[
a=(111100)^q11.
\]
We first prove
\[
C(R_n,a)=n-1.
\]
For any coordinate $j$, the Boolean derivative is
\[
\begin{aligned}
D_jR_n(x)={}&x_{j+1}x_{j+2}x_{j+3}
\oplus x_{j-1}x_{j+1}x_{j+2}\\
&\oplus x_{j-2}x_{j-1}x_{j+1}
\oplus x_{j-3}x_{j-2}x_{j-1},
\end{aligned}
\tag{5.1}
\]
where indices are modulo $n$.

Leave coordinate $2$ unspecified and fix all other coordinates according
to $a$. Since
\[
D_2R_n
=x_3x_4x_5\oplus x_1x_3x_4
\oplus x_0x_1x_3\oplus x_{n-1}x_0x_1,
\]
and
\[
a_0=a_1=a_3=a_{n-1}=1,\qquad a_4=0,
\]
we obtain
\[
D_2R_n(a)=0\oplus0\oplus1\oplus1=0.
\]
The derivative does not involve $x_2$, so after all other coordinates are
fixed the value of $R_n$ is independent of $x_2$. Hence
\[
C(R_n,a)\le n-1.
\tag{5.2}
\]

Substitution of the six-periodic pattern $111100$ into (5.1), together
with the terminal symbols $11$, gives
\[
D_jR_n(a)=0
\quad\Longleftrightarrow\quad
j\in\{2,n-1\}.
\tag{5.3}
\]
Away from the cyclic boundary this follows by checking the six residue
classes modulo $6$; the boundary positions follow directly from the final
pattern $\cdots11110011$. Thus precisely $2$ and $n-1$ are nonsensitive at
$a$, and
\[
s(R_n,a)=n-2.
\]
Every sensitive coordinate belongs to every certificate at $a$. Therefore
a certificate of size at most $n-2$ would have to leave both $2$ and
$n-1$ unspecified. If all other coordinates are fixed according to $a$,
the cyclic window
\[
\{n-1,0,1,2\}
\]
contributes the quadratic monomial
\[
x_{n-1}x_2.
\]
No other length-four window contains both coordinates, so this term cannot
cancel. The restricted function is nonconstant. Hence the two coordinates
cannot both be omitted, and every certificate has size at least $n-1$.
Together with (5.2),
\[
C(R_n,a)=n-1.
\tag{5.4}
\]

We next determine the block sensitivity at $a$. All $n-2$ sensitive
coordinates other than $2$ and $n-1$ give pairwise disjoint sensitive
singleton blocks. Moreover,
\[
B=\{2,n-1\}
\]
is itself a sensitive block. Indeed, with all other variables fixed as in
$a$, the restricted algebraic normal form contains the variable-dependent
part
\[
x_2\oplus x_{n-1}\oplus x_2x_{n-1}.
\]
At the original values $(x_2,x_{n-1})=(1,1)$ this expression equals $1$,
whereas after flipping both coordinates it equals $0$. Thus
\[
R_n(a^B)\ne R_n(a).
\]
The block $B$ together with the $n-2$ sensitive singleton blocks gives
$n-1$ pairwise disjoint sensitive blocks. Therefore
\[
bs(R_n,a)\ge n-1.
\]
By the general inequality $bs(f,x)\le C(f,x)$ and (5.4),
\[
bs(R_n,a)=C(R_n,a)=n-1.
\tag{5.5}
\]
Hence globally
\[
bs(R_n)\ge n-1,\qquad C(R_n)\ge n-1.
\]

It remains to prove the matching global upper bound. Since the known
sensitivity formula gives
\[
s(R_n)=n-2<n,
\]
every input $x$ has at least one nonsensitive coordinate $j$. Fixing all
coordinates except $j$ is then a certificate: with the other $n-1$
coordinates fixed, the restricted Boolean function has only the variable
$x_j$, and nonsensitivity at $x$ says that its two possible values agree.
Consequently
\[
C(R_n,x)\le n-1
\]
for every $x$, and hence
\[
C(R_n)\le n-1.
\]
Together with (5.4), this gives
\[
C(R_n)=n-1.
\]
Finally,
\[
n-1\le bs(R_n)\le C(R_n)=n-1,
\]
so
\[
bs(R_n)=C(R_n)=n-1=6q+1.
\]
Combining this with $s(R_n)=6q$ completes the proof.
\end{proof}

\begin{corollary}
The consecutive quartic family contains infinitely many rotation symmetric
Boolean functions satisfying
\[
\boxed{s(f)<bs(f)=C(f).}
\]
\end{corollary}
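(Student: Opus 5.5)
This follows directly from the Quartic separation theorem just proved, applied along the infinite subfamily $n=6q+2$ with $q\ge1$. For each such $q$, set $f=R_{6q+2}$. That theorem gives
\[
s(f)=6q<6q+1=bs(f)=C(f),
\]
so every member of the family satisfies $s(f)<bs(f)=C(f)$.

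It remains to check that infinitely many distinct functions arise. This holds because the number of variables $n=6q+2$ is strictly increasing in $q$, so the functions $R_{6q+2}$ are pairwise distinct. Each is rotation symmetric by construction, being the XOR over the full cyclic orbit of the monomial $x_0x_1x_2x_3$.

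The only point needing care is that the separation theorem relies on the known sensitivity value $s(R_{6q+2})=6q$ from the companion preprint~\cite{LiZhangRSBFSensitivity}. That value is assumed as an earlier result here, so there is no real obstacle.
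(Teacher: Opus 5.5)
Your proposal is correct and matches the paper: the corollary is read off directly from the Quartic separation theorem applied to $R_{6q+2}$ for $q\ge1$, and the functions are distinct because they have different numbers of variables. The class $n\equiv3\pmod6$ gives a second such infinite family, but one family suffices.
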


\subsection{Other residue classes modulo six}

The sensitivity formula for $R_n$ immediately settles three further residue
classes.  We record these consequences first.

\begin{theorem}
For every integer $q\ge1$,
\[
\boxed{s(R_{6q})=bs(R_{6q})=C(R_{6q})=6q,}
\]
\[
\boxed{s(R_{6q+1})=bs(R_{6q+1})=C(R_{6q+1})=6q,}
\]
and
\[
\boxed{s(R_{6q+5})=bs(R_{6q+5})=C(R_{6q+5})=6q+4.}
\]
The last formula also holds for $q=0$ (that is, $n=5$).
\end{theorem}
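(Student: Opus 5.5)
The plan is to apply the global Sandwich criterion, $s(f)=C(f)\Rightarrow s=bs=C$, in each of the three residue classes. The lower bounds come for free: the sensitivity formulas of \cite{LiZhangRSBFSensitivity} give $s(R_{6q})=6q$, $s(R_{6q+1})=6q$ and $s(R_{6q+5})=6q+4$, and $s\le bs\le C$. So everything reduces to proving the matching upper bound $C(R_n)\le s(R_n)$ in each class.

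For $n=6q$, the inequality $C\le n$ is trivial, so $s(R_{6q})=6q=n$ already forces $s=bs=C=n$. This is the same argument as in the odd-degree theorem.

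For $n=6q+5$ (including $n=5$), we have $s=n-1$. I will reuse the device from the end of the quartic separation theorem. Since $s(R_n)<n$, every input $x$ has a nonsensitive coordinate $j$. Fixing the other $n-1$ coordinates to their values in $x$ leaves a function of $x_j$ alone, and it is constant because $j$ is nonsensitive. Hence $C(R_n,x)\le n-1$ for every $x$, so $C(R_n)\le n-1=s(R_n)$, and the Sandwich criterion finishes this case.

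For $n=6q+1$, we have $s=n-1$ as well, so the same single-coordinate argument gives $C(R_{6q+1})\le n-1=6q=s(R_{6q+1})$, and the Sandwich criterion applies again.

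The real content in all three cases is therefore the imported sensitivity values. The step I expect to need most care is that these values hold exactly as stated for every $q\ge1$, and for $q=0$ when $n=5$; any small-$n$ exceptions in the cited formula would have to be checked by hand. For example, at $n=5$ the all-ones input has $D_jR_5(1^5)=4\equiv0$, so $s(R_5)=4$ has to be witnessed by a different input. Beyond that, no hypergraph or matching argument is needed, because in each of these classes $s$ is at least $n-1$ and the trivial certificate bound already meets it.
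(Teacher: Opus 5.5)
Your proposal is correct and follows essentially the same route as the paper. For $n=6q$ the imported value $s=n$ forces equality, and for $n=6q+1$ and $n=6q+5$ the imported value $s=n-1$, combined with the certificate that fixes everything except one nonsensitive coordinate, gives $C\le n-1=s$, so the sandwich closes.
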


\begin{proof}
For $n=6q$, the sensitivity formula gives $s(R_n)=n$, so the inequalities
$s\le bs\le C\le n$ force equality throughout.

For $n=6q+1$ and $n=6q+5$ (including $q=0$ in the latter
case), the known sensitivity formula gives $s(R_n)=n-1$.
Since $s(R_n)<n$, every input has a nonsensitive coordinate.  Fixing all
other $n-1$ coordinates gives a certificate, and hence $C(R_n)\le n-1$.
Thus $s(R_n)\le bs(R_n)\le C(R_n)\le n-1=s(R_n)$.
\end{proof}

We next treat the residue class $n\equiv3\pmod6$.  We use the following
local observation.  For two coordinates $i,j$, write $D_iD_jR_n(x)$ for
the mixed Boolean derivative.

\begin{lemma}[Three-nonsensitive-coordinate lemma]
Let $n\ge10$, and suppose that $R_n$ has at least three nonsensitive
coordinates at an input $x$. Then there are two nonsensitive coordinates
$i,j$ such that
\[
D_iD_jR_n(x)=0.
\]
Consequently, after all coordinates except $i,j$ are fixed according to
$x$, the resulting two-variable restriction is constant.
\end{lemma}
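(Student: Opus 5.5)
The plan is to reduce the lemma to a purely local finite check, using the explicit form of the mixed derivative of $R_n$. For two distinct coordinates $i,j$, the mixed derivative $D_iD_jR_n$ is the XOR, over all length-four windows $\{t,t+1,t+2,t+3\}$ containing both $i$ and $j$, of the product of the two remaining variables of the window. Since $n\ge10$, distinct windows are distinct sets and no window wraps around to meet itself. Hence:
\begin{itemize}
\item if the cyclic distance between $i$ and $j$ is at least $4$, then $D_iD_jR_n\equiv0$ identically;
\item otherwise $D_iD_jR_n$ is given by one of three explicit formulas, namely
\[
D_iD_{i+3}R_n=x_{i+1}x_{i+2},\qquad
D_iD_{i+2}R_n=x_{i+1}(x_{i-1}\oplus x_{i+3}),
\]
\[
D_iD_{i+1}R_n=x_{i+2}x_{i+3}\oplus x_{i-1}x_{i+2}\oplus x_{i-2}x_{i-1}.
\]
\end{itemize}

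The \emph{consequently} clause is then routine. Write the two-variable restriction as $c\oplus\alpha x_i\oplus\beta x_j\oplus\gamma x_ix_j$. Nonsensitivity of $i$ and of $j$ at $x$ says that the two single flips do not change the value. The vanishing of $D_iD_jR_n(x)$ says that $f(x)\oplus f(x^{(i)})\oplus f(x^{(j)})\oplus f(x^{\{i,j\}})=0$, so the double flip does not change the value either. Therefore all four values agree and the restriction is constant.

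For the main claim, take three nonsensitive coordinates.
\begin{itemize}
\item If some two of them are at cyclic distance $\ge4$, we are done by the first observation.
\item Otherwise the three are pairwise within distance $3$. Because $n\ge10$, they cannot wrap around the cycle, so they lie in an arc of length at most $3$. Up to rotation and the reflection $i\mapsto-i$ (which preserves $R_n$), the configurations are $\{0,1,2\}$ and $\{0,1,3\}$; the set $\{0,2,3\}$ is the mirror image of $\{0,1,3\}$.
\end{itemize}
In each configuration, suppose for contradiction that all three pairwise mixed derivatives equal $1$ at $x$. Combine these with the three conditions $D_jR_n(x)=0$, using formula (5.1) for $D_jR_n$. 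All of these involve only the variables $x_{-3},\dots,x_{6}$, so this is a finite Boolean constraint system.

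For $\{0,1,3\}$, the condition $D_0D_3=1$ forces $x_1=x_2=1$, and $D_1D_3=1$ forces $x_0\ne x_4$. Substituting into $D_0D_1=1$ and into the three vanishing first derivatives should then collapse quickly to a contradiction.

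For $\{0,1,2\}$, the condition $D_0D_2=1$ forces $x_1=1$ and $x_{-1}\ne x_3$. Then $D_0D_1=1$ and $D_1D_2=1$, together with $D_0=D_1=D_2=0$, should again be inconsistent. I would organize this by splitting on $(x_{-1},x_3)\in\{(0,1),(1,0)\}$, which by the reflection symmetry reduces to one case, and then splitting on $x_2$ and $x_0$.

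The main obstacle is this last finite case analysis, in particular for $\{0,1,2\}$, where the constraints are the least restrictive. There the outer variables $x_{-3},x_{-2},x_4,x_5$ enter the first-derivative conditions, and one must check carefully that no assignment satisfies all six equations. If a hand argument becomes messy, I would fall back on enumerating the relevant at most $2^{10}$ local patterns, which is legitimate because every constraint depends only on coordinates $-3,\dots,6$.
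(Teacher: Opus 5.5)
Your proposal is correct and follows essentially the paper's route: the distance-$\ge4$ reduction, the same two configurations $\{0,1,2\}$ and $\{0,1,3\}$ up to rotation and reflection, a finite check on the window $x_{-3},\dots,x_6$, and the same argument that the two-variable restriction is constant. The only difference is that the paper settles the finite check by exhaustive enumeration, whereas your hand analysis does close quickly, so the computer step is not needed: for $\{0,1,2\}$, the identities $D_0R_n=x_1\,D_0D_1R_n\oplus x_{-3}x_{-2}x_{-1}$ and $D_2R_n=x_1\,D_1D_2R_n\oplus x_3x_4x_5$ force $x_{-1}=x_3=1$, contradicting $x_{-1}\ne x_3$; for $\{0,1,3\}$, the first identity forces $x_{-3}=x_{-2}=x_{-1}=1$, and then $D_1R_n=x_0\oplus x_4\ne0$.
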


\begin{proof}
If two nonsensitive coordinates have cyclic distance at least $4$, no
length-four monomial contains both, so their mixed derivative vanishes.
Otherwise every pair among any three nonsensitive coordinates has cyclic
distance at most $3$. For $n\ge10$, three such positions must lie in
four consecutive positions: if their three cyclic gaps were all at most
$3$, their sum would be $n\le9$; if a gap is larger, the other two
place the three points in an arc of length at most $3$.
Up to rotation and reflection, the triple is therefore either
$\{0,1,2\}$ or $\{0,1,3\}$.

It remains to exclude the possibility that all three mixed derivatives
are $1$ while all three first derivatives vanish. This is a finite local
calculation involving only $x_{-3},x_{-2},\ldots,x_6$. Exhaustively
evaluating the $2^{10}$ assignments to the ten-variable window gives
\[
\begin{array}{c|r|r}
\text{triple}&\substack{\text{three first}\\\text{derivatives zero}}&
\substack{\text{first derivatives zero,}\\\text{three mixed derivatives one}}\\\hline
\{0,1,2\}&568&0\\
\{0,1,3\}&554&0
\end{array}
\]
For $\{0,1,2\}$, only nine of these ten variables occur in the three first derivatives; hence the count $568$ is twice the corresponding nine-variable count $284$. The tenth variable is retained to use a common enumeration for both triples.The calculation uses only the defining polynomial for $R_n$ and Boolean
differences $D_if(x)=f(x)\oplus f(x^{\{i\}})$. Its complete executable verification is reproduced in Appendix~\ref{app:verification}, and is also supplied as a standalone Python file.
In particular, the local
calculation is independent of $n\ge10$, because the relevant ten
positions are distinct. Thus some pair $i,j$ of the nonsensitive
coordinates has $D_iD_jR_n(x)=0$.

Fixing every coordinate except $i,j$ leaves a polynomial
$g(u,v)=c\oplus au\oplus bv\oplus duv$.
The mixed derivative condition yields $d=0$; nonsensitivity of $i,j$
then yields $a=b=0$. Hence $g$ is constant.
\end{proof}

\begin{theorem}[The class $n\equiv3\pmod6$]
For every integer $q\ge1$,
\[
\boxed{s(R_{6q+3})=6q<bs(R_{6q+3})=C(R_{6q+3})=6q+1.}
\]
\end{theorem}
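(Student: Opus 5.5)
The plan mirrors the proof of the quartic separation theorem for $n=6q+2$, with the three-nonsensitive-coordinate lemma supplying the global upper bound. Put $n=6q+3\ge 9$. From the sensitivity formula of \cite{LiZhangRSBFSensitivity} we take $s(R_n)=6q=n-3$. Since $s\le bs\le C$, it suffices to prove two things: a global bound $C(R_n)\le n-2$, and an explicit input $a$ with $bs(R_n,a)\ge n-2$.

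\emph{Upper bound.} Take any input $x$. Since $s(R_n,x)\le n-3$, at least three coordinates are nonsensitive at $x$. For $n\ge10$, i.e. $q\ge2$, the three-nonsensitive-coordinate lemma gives two nonsensitive coordinates $i,j$ such that fixing all other coordinates according to $x$ makes the restriction constant. Those $n-2$ coordinates therefore form a certificate, so $C(R_n,x)\le n-2$. The single remaining case $n=9$ is finite ($2^9$ inputs, or far fewer up to rotation). I would handle it by direct computation, in the same spirit as the appendix verification, or by redoing the gap argument of the lemma for $n=9$: the only failure there is three points with all cyclic gaps equal to $3$, e.g. $\{0,3,6\}$, which can be checked by hand.

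\emph{Lower bound.} I would take the $6q+2$ witness and append one more symbol, trying first
\[
a=(111100)^q111 \quad\text{or}\quad a=(111100)^q110.
\]
Using the derivative formula (5.1), I would verify as in (5.3) that exactly three coordinates are nonsensitive. The interior residues mod $6$ behave exactly as before, so only the positions near the cyclic boundary need checking. I would then look for a pair $\{i,j\}$ among these three whose simultaneous flip changes $R_n$. Concretely, the restricted two-variable polynomial $g(u,v)=c\oplus au\oplus bv\oplus duv$ must have $a=b=0$ (by nonsensitivity) and $d=1$, i.e. $D_iD_jR_n(a)=1$. The block $\{i,j\}$, together with the $n-3$ sensitive singletons and with the third nonsensitive coordinate left unused, gives $n-2$ disjoint sensitive blocks. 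This is exactly the nonsensitive-block criterion with $r=1$. It yields $bs(R_n,a)\ge n-2$, and hence $bs(R_n)=C(R_n)=n-2=6q+1$.

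\emph{Main obstacle.} The hard part is choosing the witness $a$ so that exactly three coordinates are nonsensitive and some pair of them has mixed derivative $1$. This is the quartic analogue of the $\{n-1,2\}$ window in the $6q+2$ case. If the two candidates above fail, I would search small cases ($n=9,15$) for a suitable boundary word $w$ in $(111100)^{q-1}w$ and then verify that the periodic part still behaves as in (5.3) for general $q$. Note that the lemma shows a pair with vanishing mixed derivative always exists; the witness must provide a different pair with nonvanishing mixed derivative.
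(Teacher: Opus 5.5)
The global upper bound (the three-nonsensitive-coordinate lemma for $q\ge2$, exhaustive computation for $n=9$) and the lower-bound mechanism (the nonsensitive-block criterion with $r=1$) are the same as in the paper. The gap is the witness, which is the only substantive new content of this theorem. You never produce a working one, and both of your first candidates fail:
\begin{itemize}
\item For $a=(111100)^q111$, the cyclic run of ones has length $7$ (positions $n-3,\dots,3$). Its nonsensitive coordinates are the second, fourth and sixth positions of that run, namely $n-2,0,2$. With all other coordinates fixed, the terms through them are $uv\oplus uv\oplus vw\oplus vw=0$: each distance-two pair lies in two all-one windows, which cancel, and the distance-four pair lies in none. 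So the restriction is constant and $C(R_n,a)=s(R_n,a)=n-3$.
\item For $a=(111100)^q110$, the nonsensitive coordinates are $n-4,n-3,n-2$, which lie strictly between the zeros at $n-5$ and $n-1$. Every length-four window through them contains one of those zeros, so these variables are dead and again $C(R_n,a)=n-3$.
\end{itemize}
A search over boundary words would eventually succeed, but as written the lower bound $bs(R_n)\ge n-2$ is unproved.

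What is needed is a pair of nonsensitive coordinates at distance $3$ whose unique common window is otherwise all ones.
\begin{itemize}
\item A run $1^6$ supplies such a pair: its second and fifth positions.
\item A run $0^3$ corrects the length modulo $6$, at the cost of one dead nonsensitive coordinate (its middle).
\end{itemize}
This is exactly the paper's witness $a_q=(111100)^{q-1}111100011=(111100)^q011$, whose cyclic runs are $1^6,0^2,(1^4,0^2)^{q-2},1^4,0^3$. Its nonsensitive coordinates are $2,n-4,n-1$. The restriction to them is $c\oplus u\oplus w\oplus uw$ with $u=x_2$, $w=x_{n-1}$, and $\{2,n-1\}$ is a sensitive block.

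One smaller point: your claim that nonsensitivity forces $a=b=0$ in $g(u,v)=c\oplus au\oplus bv\oplus duv$ is wrong when $d=1$. Nonsensitivity at the witness values $(u_0,v_0)$ gives $a=d\,v_0$ and $b=d\,u_0$; in the paper's witness $a=b=1$. The operative criterion is simply $D_iD_jR_n(a)=1$, because $R_n(a^{\{i,j\}})\oplus R_n(a)=D_iR_n(a)\oplus D_jR_n(a)\oplus D_iD_jR_n(a)$.
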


\begin{proof}
Let $n=6q+3$.  For $q=1$ ($n=9$), exhaustive evaluation of the $2^9=512$
inputs gives
\[
s(R_9)=6,\qquad bs(R_9)=C(R_9)=7.
\]
To make this finite check reproducible, for each input $x$ form the
family $\mathcal B_x=\{B\subseteq[9]: B\ne\varnothing,
R_9(x^B)\ne R_9(x)\}$.  Its maximum matching size equals
$bs(R_9,x)$, while its minimum transversal size equals $C(R_9,x)$.
The exhaustive program in Appendix~\ref{app:verification} evaluates
all $512$ inputs and all $511$ nonempty masks $B$ for each input.
In particular, the upper bounds $s(R_9)\le6$ and
$bs(R_9),C(R_9)\le7$ are established by this finite
computer-assisted enumeration, not by a separate handwritten proof.
Thus the displayed equalities do not invoke the $n\ge10$ lemma.  In particular, the maxima for $bs$ and $C$
are attained at the binary input with set-bit mask $63$ (bits
$0,\ldots,5$ equal to $1$).

Now assume $q\ge2$, so $n\ge15$.  The sensitivity formula gives
$s(R_n)=n-3$, so every input has at least three nonsensitive coordinates.
By the preceding lemma, two of them can be left unspecified simultaneously.
Hence
\[
C(R_n,x)\le n-2
\]
for every $x$, and therefore $C(R_n)\le n-2$.

Consider
\[
a_q=(111100)^{q-1}111100011.
\]
Its three nonsensitive coordinates are
\[
2,\qquad n-4,\qquad n-1,
\]
while all other coordinates are sensitive.  Fix all sensitive coordinates
according to \(a_q\), and write
\[
u=x_2,\qquad v=x_{n-4},\qquad w=x_{n-1}.
\]
Every quartic monomial containing \(v\) is killed by one of the fixed zero
coordinates adjacent to \(v\).  After cancellation over \(\mathbb F_2\),
the remaining dependence on the three free variables is
\[
c\oplus u\oplus w\oplus uw
\tag{5.11}
\]
for a constant \(c\in\{0,1\}\) depending only on \(q\).  In particular the
restriction is independent of \(v\), so \(v\) together with either \(u\) or
\(w\) may be left unspecified; hence \(C(R_n,a_q)\le n-2\).  On the other
hand, (5.11) is nonconstant, so all three nonsensitive coordinates cannot be
left unspecified.  Since every sensitive coordinate belongs to every
certificate, we obtain
\[
C(R_n,a_q)=n-2.
\]

At \(a_q\) we have \(u=w=1\).  The variable part of (5.11) is then
\[
u\oplus w\oplus uw=1,
\]
whereas after flipping the pair \(\{2,n-1\}\) we have \(u=w=0\), for which
the variable part is \(0\).  Thus \(\{2,n-1\}\) is a sensitive block.
Together with the \(n-3\) sensitive singleton blocks this gives \(n-2\)
pairwise disjoint sensitive blocks.  Hence
\[
bs(R_n,a_q)=C(R_n,a_q)=n-2.
\]
Consequently
\[
bs(R_n)\ge n-2,\qquad C(R_n)\ge n-2.
\]
Combining these inequalities with the global upper bound gives
\[
bs(R_n)=C(R_n)=n-2=6q+1.
\]
\end{proof}

\subsection{The residue class $n\equiv4\pmod6$}

We now complete the final residue class.

The following argument uses the three-nonsensitive-coordinate lemma
proved above; in particular, it supplies a certificate of size at most
$n-2$ whenever an input has at least three nonsensitive coordinates.

\begin{theorem}[The class $n\equiv4\pmod6$]
For every $q\ge1$,
\[
\boxed{s(R_{6q+4})=bs(R_{6q+4})=C(R_{6q+4})=6q+2.}
\]
\end{theorem}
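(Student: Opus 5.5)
The statement presumably relies on the companion sensitivity formula $s(R_{6q+4})=6q+2=n-2$. Since $s\le bs\le C$, the whole content of the theorem is the upper bound $C(R_n,x)\le n-2$ for every input $x$. By the sandwich criterion this then forces $s=bs=C=n-2$.

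I split inputs by the number $t=n-s(R_n,x)$ of nonsensitive coordinates. If $t\ge3$, the three-nonsensitive-coordinate lemma applies, since $n\ge10$. It gives two nonsensitive coordinates whose two-variable restriction is constant, so $C(R_n,x)\le n-2$. If $t=2$, I need to show that the two nonsensitive coordinates $i,j$ can be left free simultaneously. This holds exactly when $D_iD_jR_n(x)=0$, which in turn holds automatically when their cyclic distance is at least $4$. If $t\le1$, the input has sensitivity at least $n-1>s(R_n)$, which is impossible. So the only troublesome inputs are those $x$ with
\[
s(R_n,x)=n-2,
\]
whose two nonsensitive coordinates $i,j$ lie within cyclic distance $3$ and satisfy $D_iD_jR_n(x)=1$. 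These are exactly the inputs where the $n\equiv2\pmod6$ phenomenon occurs, as at $a=(111100)^q11$. I must show such inputs do not exist when $n\equiv4\pmod6$.

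This is where I would invoke the finite-state excursion classification from the companion preprint. That work describes the inputs achieving sensitivity $n-2$ (or at least near-maximal sensitivity) as cyclic words built from a transfer automaton. Sensitive runs follow the periodic pattern $111100$, and the nonsensitive coordinates occur only at bounded "excursions" from that pattern. The plan is to use this to enumerate the possible local shapes of an excursion carrying exactly two nonsensitive coordinates at distance $\le3$ from each other. For each shape I would then compute the length residue modulo $6$ forced by closing the cycle. The expected conclusion is that every shape with $D_iD_jR_n(x)=1$ forces $n\equiv2\pmod6$, while for $n\equiv4$ the admissible two-defect excursions all have mixed derivative $0$ or separation $\ge4$.

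The main obstacle is this last step: turning the automaton description into a clean residue obstruction. Concretely, I would use (5.1) to write the condition "all coordinates outside a window are sensitive" as a local constraint that propagates the $111100$ phase. The window is a ten-position neighbourhood of $\{i,j\}$, as in the lemma's finite check. Then I would enumerate the finitely many windows where $D_i=D_j=0$ and $D_iD_j=1$, and record for each the phase shift it induces. A phase shift of $2$ modulo $6$ matches the $n\equiv2$ example. If all bad windows induce that shift and not $4$, the class $n\equiv4$ is excluded, which completes $C(R_n)\le n-2$. I would verify the window enumeration by computer, in the same way as the appendix verification.
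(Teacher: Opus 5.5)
Your reduction is correct and matches the paper's. With $s(R_n)=n-2$, inputs with at most one nonsensitive coordinate cannot occur, and inputs with at least three are handled by the three-nonsensitive-coordinate lemma. At an input with exactly two nonsensitive coordinates $i,j$, one has $C(R_n,x)\le n-2$ if and only if $D_iD_jR_n(x)=0$.

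The gap is that the remaining step is never established, and it is the whole content of the theorem beyond the lemma. You state the residue obstruction as an ``expected conclusion'' and make the proof conditional on an enumeration (``if all bad windows induce that shift and not $4$'') that you have not carried out. The enumeration as you formulate it is also not well posed:
\begin{itemize}
\item A ten-position window around $\{i,j\}$ does not by itself induce a phase shift. The phase is defined only relative to the recurrent six-cycle $001111\to011110\to\cdots\to100111$ of the weight-one state graph. The transient segments joining the defect to that cycle need not fit inside your window.
\item For small $q$, the all-sensitive complement of $\{i,j\}$ is too short to be forced onto the cycle by pigeonholing. You therefore also need the avoidance bound $s(R_n,x)\le 3n/5<n-2$ for closed walks that never visit the cycle.
\end{itemize}

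Citing the companion's finite-state results is legitimate, since the paper's proof is conditional on them too. But you must use their specific content, and that content closes the argument directly:
\begin{itemize}
\item Decompose the closed walk into primitive excursions between successive visits to the six-cycle. Since the walk is closed and has length $n=6q+4$, its total phase defect is $\equiv -n\equiv 2\pmod 6$.
\item By the cited classification, the only positive-cost primitive excursions with at most two weight-zero transitions have $(z,\delta)\in\{(1,1),(1,5),(2,4)\}$.
\item With exactly two weight-zero transitions, the options $(2,4)$, $(1,5)+(1,5)$ and $(1,1)+(1,5)$ give defects $4$, $4$ and $0$. Only $(1,1)+(1,1)$ survives.
\item The unique $(1,1)$ excursion $011110\to111101\to111011\to110111\to101111\to011110$ has transition weights $1,1,0,1,1$. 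So each excursion carries one isolated nonsensitive coordinate, and the two are at cyclic distance at least $5$ even when the excursions are adjacent.
\item Hence no length-four window contains both coordinates, and $D_iD_jR_n(x)=0$.
\end{itemize}
This is the concrete residue obstruction your plan hopes to find. Note also that you only need that no bad configuration is compatible with $n\equiv4\pmod 6$; the stronger claim that every such configuration forces $n\equiv2\pmod 6$ is not required.
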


\begin{proof}
Put $n=6q+4$. The sensitivity theorem gives $s(R_n)=n-2$.

If $x$ has at least three nonsensitive coordinates, the preceding lemma
gives $C(R_n,x)\le n-2$.

It remains to consider an input with $s(R_n,x)=n-2$, so that exactly two
coordinates are nonsensitive.  We use the finite-state framework developed
in our companion work on the sensitivity of rotation symmetric Boolean
functions~\cite{LiZhangRSBFSensitivity}.  We use the finite-state excursion classification, uniqueness claims,
and avoidance bound established in that companion paper as an external
lemma; we reproduce the particular certificate and explain how it yields
the certificate- and block-sensitivity conclusion here. Thus the proof
below is conditional on the cited finite-state results, rather than an
independent proof of those results.  The weight-one state graph
has the unique recurrent six-cycle
\[
001111\to011110\to111100\to111001
\to110011\to100111\to001111.
\]
The extremal closed walk under consideration has exactly two weight-zero
transitions, corresponding to its two nonsensitive coordinates; the six-cycle
itself consists entirely of weight-one transitions. A
closed walk avoiding this six-cycle satisfies the avoidance bound
\[
s(R_n,x)\le\frac{3n}{5}<n-2,
\]
so an extremal walk must visit the cycle.

Decompose the walk into primitive excursions between successive visits to
the cycle. If $z(P)$ is the number of weight-zero transitions and
\[
\delta(P)=\phi(\operatorname{end}P)-\phi(\operatorname{start}P)-|P|
\pmod6,
\]
the finite excursion certificate from the sensitivity proof shows that,
among positive-cost primitive excursions of cost at most two, the possible
pairs $(z,\delta)$ are
\[
(1,1),\qquad(1,5),\qquad(2,4).
\]
Since the complete walk has length $6q+4$, its total phase defect is
$2\pmod6$. With total defect cost two, the only possible decomposition is
\[
(1,1)+(1,1).
\]
By the excursion uniqueness result in
\cite{LiZhangRSBFSensitivity}, the primitive excursion of type $(1,1)$ is
\[
011110\to111101\to111011\to110111
\to101111\to011110,
\]
with transition weights
\[
1,1,0,1,1.
\]
Thus each excursion contributes one nonsensitive coordinate, and even when
the two excursions are consecutive their zero-weight transitions are
separated by five positions. If $i,j$ are the two nonsensitive coordinates,
then $\operatorname{dist}(i,j)\ge5$. No length-four monomial contains both,
so $D_iD_jR_n(x)=0$. As in the lemma, fixing every coordinate except
$i,j$ makes the restricted function constant. Hence
\[
C(R_n,x)\le n-2.
\]

We have proved $C(R_n,x)\le n-2$ for every input $x$, and therefore
$C(R_n)\le n-2$. Finally,
\[
n-2=s(R_n)\le bs(R_n)\le C(R_n)\le n-2,
\]
which proves the result.
\end{proof}

\subsection{Complete quartic classification}

Combining the preceding results gives the complete answer for the
consecutive quartic family.

\begin{theorem}[Complete consecutive quartic classification]
Let $n\ge5$ and write $n=6q+r$, $0\le r\le5$. Then
\[
\boxed{
\begin{array}{c|ccc}
r&s(R_{6q+r})&bs(R_{6q+r})&C(R_{6q+r})\\ \hline
0&6q&6q&6q\\
1&6q&6q&6q\\
2&6q&6q+1&6q+1\\
3&6q&6q+1&6q+1\\
4&6q+2&6q+2&6q+2\\
5&6q+4&6q+4&6q+4
\end{array}}
\]
The classification is stated only for $n\ge5$, where the cyclic
quartic windows are distinct.
In particular,
\[
s(R_n)<bs(R_n)=C(R_n)
\]
holds precisely when $n\equiv2,3\pmod6$.
\end{theorem}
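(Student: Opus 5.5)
The plan is to assemble the table row by row from the residue-class theorems already proved. Each theorem is checked over its stated range of $q$, and the few small values of $n\ge5$ not covered there are handled separately. Write $n=6q+r$ with $n\ge5$.

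For $r=0$ and $r=1$ we have $q\ge1$, since $n\ge5$. The theorem on $R_{6q}$, $R_{6q+1}$, $R_{6q+5}$ then gives rows $0$ and $1$. Row $5$ comes from the same theorem, which explicitly includes $q=0$, i.e.\ $n=5$. Row $2$ with $q\ge1$ is exactly the quartic separation theorem ($n=8,14,\dots$). Row $3$ with $q\ge1$ is the theorem for the class $n\equiv3\pmod6$. That theorem covers $n=9$ by the recorded exhaustive computation, and $n\ge15$ by the three-nonsensitive-coordinate lemma together with the witness $a_q$. Row $4$ with $q\ge1$ is the theorem for the class $n\equiv4\pmod6$, which rests on the companion finite-state results.

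It remains to check which pairs $(q,r)$ with $6q+r\ge5$ are left over. These are only $q=0$ with $r=5$, which is already covered. The values $r\in\{2,3,4\}$ with $q=0$ give $n<5$ and are excluded. So every entry of the table is supplied by a previously proved theorem.

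For the final equivalence, read off the table. In rows $0,1,4,5$ all three measures coincide, so $s<bs$ fails. In rows $2$ and $3$ we have $s=6q<6q+1=bs=C$. Hence $s(R_n)<bs(R_n)=C(R_n)$ holds exactly when $n\equiv2,3\pmod6$.

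The main obstacle is not in this assembly step. It is the validity of the inputs: the row-$4$ upper bound depends on the external excursion classification, and the $n=9$ case depends on computer enumeration. In the proof I will state explicitly that the classification inherits these dependencies. I will also note that the sensitivity column is the formula quoted from~\cite{LiZhangRSBFSensitivity}, which each cited theorem uses.
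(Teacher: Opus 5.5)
Your proposal is correct and matches the paper's own argument: the classification is obtained by combining the preceding residue-class theorems (the $r=0,1,5$ theorem including $n=5$, the quartic separation theorem for $r=2$, and the $r=3$ and $r=4$ theorems for $q\ge1$), and $n\ge5$ forces $q\ge1$ except when $r=5$. Your explicit note that the table inherits the companion-paper sensitivity formula, the finite-state excursion results used for $r=4$, and the $n=9$ enumeration matches the dependencies the paper itself states.
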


\section{Further Questions}

For odd-degree monomial rotation symmetric functions, all three measures
\(s\), \(bs\), and \(C\) are determined.  The same is true for every
quadratic MRS function and for the consecutive quartic family, where the
latter admits a complete classification in all six residue classes modulo
\(6\).  A natural next problem is to determine analogous relations among
\(s\), \(bs\), and \(C\) for other even-degree monomial rotation symmetric
functions.  It would also be interesting to determine when the
opposite-necklace difference hypergraph has matching number equal to its
transversal number, since this is exactly the condition \(bs(f,X)=C(f,X)\)
at a necklace \(X\).

\section*{Declaration on the Use of Artificial Intelligence}

Generative artificial intelligence tools were used for language editing and
for assistance with exploratory and computational checks during the
preparation of this manuscript.  The authors independently verified the
mathematical statements, proofs, and computations presented in the paper and
take full responsibility for the content.

\appendix
\section{Exact verification code for the local lemma and small cases}\label{app:verification}
This complete program requires Python 3.10 or later (because it uses \texttt{int.bit\_count()}). It verifies the local lemma and computes the global maxima of sensitivity, block sensitivity, and certificate complexity for $n=5,9$ by examining every input. It uses no external packages. In the local test, the mixed-derivative condition is evaluated only when all three first derivatives vanish. The $n=9$ check is computationally heavier: it examines $512$ inputs and up to $511$ masks per input, with additional subset searches for certificates and recursive matching for block sensitivity; runtime depends on the machine and Python implementation. The source package also includes an identical standalone copy.
\begin{scriptsize}
\begin{verbatim}
"""Independent exact checks for local three-nonsensitive lemma and n=5,9.
Requires Python 3.10+. No external packages.
Run: python verify_rsbf_small_cases.py
"""
from functools import lru_cache

def quartic(x, n):
    out = 0
    for i in range(n):
        out ^= int(all((x >> ((i+j) % n)) & 1 for j in range(4)))
    return out

def derivative(x, i, n):
    return quartic(x,n) ^ quartic(x ^ (1<<i),n)

def mixed(x, i, j, n):
    return derivative(x,i,n) ^ derivative(x ^ (1<<j),i,n)

# For n>=10, each derivative at 0,1,2,3 depends only on
# x_{-3},...,x_6. We embed this window in a 13-cycle.
for triple in [(0,1,2), (0,1,3)]:
    nonsensitive = all_mixed_one = 0
    for mask in range(1<<10):
        x = sum(1 << (j%13) for t,j in enumerate(range(-3,7)) if (mask>>t)&1)
        if all(derivative(x,i,13)==0 for i in triple):
            nonsensitive += 1
            if all(mixed(x,triple[a],triple[b],13)==1
                   for a,b in [(0,1),(0,2),(1,2)]):
                all_mixed_one += 1
    print('local',triple, 'nonsensitive_assignments=',nonsensitive,
          'all_three_mixed_one=',all_mixed_one)
    assert all_mixed_one == 0

for n, expected in [(5,(4,4,4)), (9,(6,7,7))]:
    N=1<<n
    values=[quartic(x,n) for x in range(N)]
    maxima=[0,0,0]
    for x in range(N):
        opposite=[mask for mask in range(1,N) if values[x^mask] != values[x]]
        sens=sum(values[x^(1<<i)]!=values[x] for i in range(n))
        # Certificate: complement of a maximal free subset containing no opposite mask
        C=min((fixed.bit_count() for fixed in range(N)
               if all(mask&fixed for mask in opposite)),default=0)
        @lru_cache(None)
        def match(available):
            return max((1+match(available^mask) for mask in opposite
                        if mask&available==mask),default=0)
        bs=match(N-1)
        maxima=[max(maxima[i],v) for i,v in enumerate((sens,bs,C))]
    print('global',n, tuple(maxima))
    assert tuple(maxima)==expected
\end{verbatim}
\end{scriptsize}

\end{document}